\documentclass[12pt,a4paper,reqno]{amsart}
\usepackage[utf8]{inputenc}
\usepackage[english]{babel}
\usepackage{enumerate}
\usepackage{mathtools}
\usepackage{mathrsfs}
\usepackage{soul, xcolor}
\usepackage{wasysym}
\usepackage{amsmath,amssymb,xpatch,relsize,graphics,graphicx,amsthm}
\usepackage{float,graphicx}
\usepackage{subcaption}

\xapptocmd\normalsize{%
	\abovedisplayskip=12pt plus 3pt minus 9pt
	\abovedisplayshortskip=0pt plus 3pt
	\belowdisplayskip=12pt plus 3pt minus 9pt
	\belowdisplayshortskip=7pt plus 3pt minus 4pt
}{}{}
\theoremstyle{definition}
\newtheorem{definition}{Definition}[section]
\newtheorem{remark}[definition]{Remark}

\theoremstyle{plain}
\newtheorem{theorem}[definition]{Theorem}

\newtheorem{lemma}[definition]{Lemma}

\numberwithin{equation}{section}

\title[Sharp Bohr-Type  inequalities]{Sharp Bohr-Type  inequalities for certain classes of close-to-convex  functions}
\author[S. Rana]{Shalini Rana}
\address{Department of Mathematics, University of Delhi, Delhi--110 007, India}
\email{shalinirana3010@gmail.com}
\author[N.K. Jain]{Naveen Kumar Jain*}
\address {Department of Mathematics, Aryabhatta College, Delhi-110021,India}
\email{naveenjain@aryabhattacollege.ac.in{*}}

\date{}

\keywords{Starlike functions;  Analytic Functions;  Bohr Inequality; Subordination\\}

\subjclass[2010]{30C45, 30C50, 30C80}

\allowdisplaybreaks

\begin{document}

\begin{abstract}
In this article, we determine the Rogosinski radii for certain subclasses of close-to-convex functions defined on open unit disc $\mathbb{D}= \{z \in \mathbb{C}: |z| < 1\}$. Furthermore, we establish improved versions of the classical Bohr inequality and the Bohr-Rogosinski inequality pertaining to these subclasses. We demonstrate that all results derived in the study are sharp.
\end{abstract}
\maketitle

\section{Introduction and preliminaries}
Let $\mathcal{A}$ be the class of analytic functions $g$ with the Taylor series expansion $g(z)=\sum_{n=0}^{\infty}a_nz^n$ in the unit disc $\mathbb{D}=\{ z \in \mathbb{C}: |z|<1 \}$. The classical Bohr inequality \cite{boh1914} says that if $g\in\mathcal{A}$  and  $g$ maps the unit disc into itself, then
\begin{equation}\label{first}
\sum_{n=0}^{\infty}|a_n|r^n\leq1\quad\text{for}\quad |z|=r\leq\frac{1}{3}
\end{equation}
and the radius  $1/3$ cannot be improved. The radius $1/3$ is known as Bohr radius, while the inequality \eqref{first} is known as Bohr inequality. Initially, the inequality \eqref{first} was established by Harald Bohr for radius $r\leq1/6$ and later Weiner, Riesz and Schur obtained the inequality for $r\leq1/3$. It is easy to see that $(1-|a_0|)$ is equal to $d(g(0), \partial g(\mathbb{D}))$, where $d$ is the Euclidean distance and $\partial g(\mathbb{D})$  is the boundary of $g(\mathbb{D})$. Therefore, the Bohr inequality can be re-written in the following Euclidean distance form
\begin{equation}\label{second}
d\left(\sum_{n=0}^{\infty}|a_nz^n|, |a_0|\right)=\sum_{n=1}^{\infty}|a_nz^n|\leq1-|a_0|=d(g(0), \partial g(\mathbb{D})).
\end{equation}
The inequality \eqref{second} can be generalized to class $\mathcal{G}$ consisting of functions $g$ analytic in $\mathbb{D}$ such that $g(\mathbb{D})\subset\Delta$, where $\Delta$ is any given domain. The class $\mathcal{G}$ satisfies the Bohr phenomenon if there exists largest $r_{\Delta}\in (0, 1)$ such that
\begin{equation*}
d\left(\sum_{n=0}^{\infty}|a_nz^n|, |a_0|\right)=\sum_{n=1}^{\infty}|a_nz^n|\leq d(g(0), \partial \Delta)
\end{equation*}
holds for all $|z|=r\leq r_{\Delta}$ and for all the functions $g\in\mathcal{G}$. It is interestingly
enough, for the convex domain $\Delta$, the radius $r_{\Delta}=1/3$ satisfying \eqref{second} coincides with Bohr radius  \cite{aiz2007}. The Bohr radius has been obtained for the class $\mathcal{G}$ when $\Delta$ has positive real part \cite{Ali2019} and  Janowski starlike \cite{Ana2021} (see also \cite{Ahu2020}). Several  Bohr radii have been obtained for the classes of analytic functions $g$ satisfying the differential
subordination $g(z)+\beta zg'(z)+\gamma z^2g''(z)\prec k(z)$, when the function $k$ is convex and starlike \cite{Abu2014}, convex and starlike of order $\alpha$ \cite{Jai2020} and  Janowski starlike \cite{Ana2021}.
\\
\noindent
The Bohr inequality has emerged as an active research area for operator algebraists after Dixon \cite{Dix1995} established a connection between the inequality and the characterization of Banach Algebra that satisfies Von Neumann inequality. Improved versions of Bohr inequality for various classes of analytic functions  becomes now a days an active research area. Kayumov et al. \cite{Kay2018} obtained several different improved versions of the classical Bohr inequality. Recently, Ahamed and  Ahammed \cite{ahm2023} obtain sharp refined Bohr–Rogosinski-type
inequalities for the subordination class of univalent and convex function. In 2021, Liu et al. \cite{LIU2021} generalized and improved several Bohr-type inequalities for bounded analytic functions. Recently, Rana and Jain  \cite{sha2025} has established several improved Bohr-type inequalities for starlike functions of order $\alpha$,  functions whose derivative has positive real part and the  functions  convex in the direction of imaginary axis, see also \cite{Pon2020, Hua2020, Liu2023, Pon2023}.
\\
\noindent
In 1952, Kaplan \cite{Kap1952} introduced the class of close-to-convex functions $f\in\mathcal{A}$ satisfying the condition
\begin{equation}\label{eq23}
Re \left(\frac{zf^{'}}{g} \right) >0\quad (z \in \mathbb{D}),
\end{equation}
where $g$ is a function starlike  with respect to origin  of the form
\begin{equation}\label{eq24}
g(z)=b_1z+...  Re(b_1)> 0.
\end{equation}
The class is denoted by $\mathcal{C}$. Later, Silverman and Telage \cite{sil1979} introduced the three subclasses, $\mathcal{C}_1$, $\mathcal{C}_2$ and $\mathcal{C}_3$ of class $\mathcal{C}$.
\noindent
A function $f$ is said to be in the class $\mathcal{C}_1$ if there exists a convex function $g$ of the form \eqref{eq24} such that \eqref{eq23} is satisfied.
A function $f$ is said to be in the class $\mathcal{C}_2$ if there exists a convex function $g$ of the form \eqref{eq24} satisfying
\begin{equation*}
Re\left( \frac{(zf^{'})^{'}}{g^{'}}  \right)>0\quad (z \in \mathbb{D}).
\end{equation*}
\noindent
A function $f$ is said to be in the class $\mathcal{C}_3$ if
\begin{equation*}
Re\left(   \frac{z((zf^{'})^{'})^{'}}{(zg^{'})^{'}}  \right)>0 \quad (z \in \mathbb{D}),
\end{equation*}
where  $g$ is a convex function of the form \eqref{eq24}. They proved that  $\mathcal{C}_3 \subset \mathcal{C}_2\subset \mathcal{C}_1 \subset \mathcal{C}$.
In 2017,
Kayumov and Ponnusamy \cite{kay2017} introduced and found the following Bohr-Rogosinski inequality and  Bohr–Rogosinski radius for the class of  functions mapping unit disc to unit disc.
\begin{theorem}\cite{kay2017}
Suppose that $f:\mathbb{D}\rightarrow\mathbb{D}$ and $f(z)=\sum_{n=0}^{\infty}a_nz^n$. Then
\begin{equation*}
|f(z)|+\sum_{n=N}^{\infty}|a_n r^n|\leq1 \quad \text{for}\, |z|=r\leq R_N,
\end{equation*}
where $R_N$ is the positive root of the equation $2(1+r)r^N-(1-r)^2=0$. The radius $R_N$ is the best possible. Moreover,
\begin{equation*}
|f(z)|^2+\sum_{n=N}^{\infty}|a_n r^n|\leq1 \quad \text{for} \,|z|=r\leq R'_N,
\end{equation*}
where $R'_N$ is the positive root of the equation $(1+r)r^N-(1-r)^2=0$. The radius $R'_N$ is the best possible.
\end{theorem}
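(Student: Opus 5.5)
We prove the theorem of Kayumov and Ponnusamy by combining two classical estimates for a self-map $f$ of $\mathbb{D}$ with $a:=|a_0|<1$. The first is the Schwarz--Pick estimate
\begin{equation*}
|f(z)|\le \frac{r+a}{1+ar},\qquad |z|=r.
\end{equation*}
The second is the coefficient inequality $|a_n|\le 1-a^2$ for $n\ge1$. Using the second, the tail is bounded by
\begin{equation*}
\sum_{n=N}^{\infty}|a_n|r^n\le (1-a^2)\frac{r^N}{1-r}.
\end{equation*}
Hence it suffices to show that
\begin{equation*}
\Phi(a):=\frac{r+a}{1+ar}+(1-a^2)\frac{r^N}{1-r}\le 1
\end{equation*}
for all $a\in[0,1)$ when $r\le R_N$. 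Rewriting gives $1-\frac{r+a}{1+ar}=\frac{(1-a)(1-r)}{1+ar}$. After dividing by $1-a>0$, the condition becomes
\begin{equation*}
(1+a)\frac{r^N}{1-r}\le \frac{1-r}{1+ar},\qquad\text{i.e.}\qquad (1+a)(1+ar)r^N\le (1-r)^2 .
\end{equation*}
The left side is increasing in $a$, so the worst case is $a\to1^-$. There the condition reads $2(1+r)r^N\le(1-r)^2$. This holds exactly for $r\le R_N$, because $2(1+r)r^N-(1-r)^2$ is increasing on $(0,1)$, negative at $0$ and positive at $1$.

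For the second inequality we repeat the argument with $|f(z)|^2\le \left(\frac{r+a}{1+ar}\right)^2$. The identity to use is
\begin{equation*}
1-\left(\frac{r+a}{1+ar}\right)^2=\frac{(1-a^2)(1-r^2)}{(1+ar)^2}.
\end{equation*}
After dividing by $1-a^2$, the requirement becomes
\begin{equation*}
\frac{r^N}{1-r}\le\frac{1-r^2}{(1+ar)^2},\qquad\text{i.e.}\qquad (1+ar)^2r^N\le (1-r)^2(1+r).
\end{equation*}
Again the worst case is $a\to1$, giving $(1+r)^2r^N\le(1-r)^2(1+r)$, that is, $(1+r)r^N\le(1-r)^2$. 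This is exactly the defining condition for $R'_N$.

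For sharpness we use the Möbius function $f(z)=\frac{a-z}{1-az}$ with $a\in(0,1)$. Its coefficients are $a_0=a$ and $a_n=-(1-a^2)a^{n-1}$ for $n\ge1$. At $z=-r$ we have $|f(-r)|=\frac{a+r}{1+ar}$, and the tail equals $(1-a^2)\frac{a^{N-1}r^N}{1-ar}$. Fix $r>R_N$ and let $a\to1^-$. The sum then equals
\begin{equation*}
1-(1-a)\left[\frac{1-r}{1+ar}-(1+a)\frac{a^{N-1}r^N}{1-ar}\right].
\end{equation*}
The bracket tends to $\frac{1-r}{1+r}-\frac{2r^N}{1-r}$, which is negative precisely when $r>R_N$. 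So for $a$ close to $1$ the sum exceeds $1$. The same computation with the square settles $R'_N$.

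The main obstacle is the coefficient bound $|a_n|\le 1-|a_0|^2$ for all $n\ge1$. This is Wiener's inequality, and we would invoke it as classical. It can be proved by applying the $n=1$ case, which follows from Schwarz--Pick at the origin, to the averaged function $\frac1n\sum_k f(\omega^k z^{1/n})$, where $\omega=e^{2\pi i/n}$; this function has coefficient $a_n$ in degree one. Everything else is elementary monotonicity in $a$.
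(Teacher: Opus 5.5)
The paper gives no proof of this statement; it quotes the result from \cite{kay2017} as background, so there is no internal argument to compare yours against. Your proof is correct and complete, and it is the standard argument from that source. It uses the Schwarz--Pick bound $|f(z)|\le (r+a)/(1+ar)$, Wiener's bound $|a_n|\le 1-a^2$, and monotonicity in $a=|a_0|$, with sharpness via $(a-z)/(1-az)$ at $z=-r$ as $a\to 1^-$. The key steps all check out:
\begin{itemize}
\item the identities $1-\frac{r+a}{1+ar}=\frac{(1-a)(1-r)}{1+ar}$ and $1-\left(\frac{r+a}{1+ar}\right)^2=\frac{(1-a^2)(1-r^2)}{(1+ar)^2}$;
\item the reduction of the first inequality to $(1+a)(1+ar)r^N\le(1-r)^2$ and of the second to $(1+ar)^2r^N\le(1-r)^2(1+r)$;
\item the unique roots of $2(1+r)r^N-(1-r)^2$ and $(1+r)r^N-(1-r)^2$;
\item the coefficient expansion $a_n=-(1-a^2)a^{n-1}$;
\item the averaging proof of $|a_n|\le 1-|a_0|^2$.
\end{itemize}

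The paper's own theorems for $\mathcal{C}_1$, $\mathcal{C}_2$, $\mathcal{C}_3$ follow the same template: a growth bound plus a coefficient bound, with sharpness from an extremal function. The difference is that there a single function attains both bounds at once. For self-maps of $\mathbb{D}$ no function does, since the disc automorphism has $|a_n|=(1-a^2)a^{n-1}<1-a^2$ for $n\ge 2$. That is exactly why your sharpness argument correctly passes to the limit $a\to 1^-$ rather than evaluating at a single extremal.
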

\noindent
For the results on Bohr-Rogosinski inequality, we refer to the articles \cite{ALK2020, KAV2021, PON2022,Pon2020}.
\\
\noindent
In the present work, we extend this line of research by determining the sharp Bohr–Rogosinski radius for the classes $\mathcal{C}_1, \mathcal{C}_2$ and $\mathcal{C}_3$ together with  improved
versions of the classical Bohr’s inequality . In section 2, section 3, and section 4, we find the sharp Bohr–Rogosinski radius and its various versions for the classes $\mathcal{C}_1, \mathcal{C}_2$ and $\mathcal{C}_3$ respectively.

\begin{section}{Bohr-type inequalities for the class $\mathcal{C}_1$}
\noindent
We need the following lemmas for main results.
\begin{lemma}\label{lm1}\cite{Sil1972}
If $f(z)= z + \sum_{n=2}^{\infty} a_n z^n \in \mathcal{C}_1$, then
\begin{equation} \label{coef1}
|a_n| \le 2- \frac{1}{n}
\end{equation}
with equality for $k(z,1,-1)$, where
\begin{equation*}
k(z,x,y)= (1+x) \frac{z}{(1-yz)} + x \bar{y} \log{(1-yz)}
\end{equation*}
for $z \in \Delta$ and $|x|=|y|=1$.
\end{lemma}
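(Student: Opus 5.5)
The plan is to reduce the coefficient problem for $f\in\mathcal{C}_1$ to coefficient bounds for two simpler classes: convex functions and Carathéodory functions. I normalize $g$ so that $g(z)=z+\sum_{n\ge2} b_n z^n$ is convex. The normalization $b_1=1$ is harmless, because for a convex $g$ with $\operatorname{Re} b_1>0$ one can rescale and absorb the argument of $b_1$ into the positive-real-part function below. Then I write $zf'(z)=g(z)p(z)$ with $p(z)=1+\sum_{n\ge1}c_nz^n$ and $\operatorname{Re}p>0$, allowing a rotation/normalization of $p$ if $p(0)$ is not $1$. Comparing coefficients of $z^n$ gives
\begin{equation*}
n a_n = b_n + \sum_{k=1}^{n-1} b_k c_{n-k}, \qquad b_1=1 .
\end{equation*}

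Next I insert the classical bounds. For convex $g$ we have $|b_k|\le 1$ (Loewner), and for Carathéodory $p$ we have $|c_j|\le 2$. This yields
\begin{equation*}
n|a_n| \le 1 + 2(n-1) = 2n-1,
\end{equation*}
which is exactly \eqref{coef1}.

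The main point to handle carefully is the normalization when $p(0)\ne 1$, i.e. when $b_1$ is not real. In that case I write $zf'/g$ as a function $q$ with $q(0)=1/b_1$ and $\operatorname{Re}q>0$. I then set $p=(q-i\operatorname{Im}q(0))/\operatorname{Re}q(0)$, which satisfies $|c_j|\le 2\operatorname{Re}q(0)$. The bookkeeping must show that the resulting factors combine to at most $2n-1$; if it does not close directly, I would use a normalization of $b_1$ that makes the estimate come out cleanly.

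Finally, for sharpness, I differentiate $k(z,1,-1)=2z/(1+z)-\log(1+z)$ to find the extremal function, taking $x=1$, $y=-1$. A direct expansion gives coefficients whose modulus is $2-1/n$. It corresponds to $g(z)=z/(1+z)$ and $p=(1-z)/(1+z)$, for which every term in the sum above has the same sign, so equality holds.
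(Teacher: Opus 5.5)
The paper gives no proof of this lemma; it quotes it from \cite{Sil1972}. Your argument therefore has to stand on its own. The skeleton is right: compare coefficients in $zf'=g\,q$, use Loewner's bound for the convex factor and Carathéodory's bound for the factor with positive real part. Your sharpness check is also correct: with $g(z)=z/(1+z)$ and $(1-z)/(1+z)$, every product equals $2(-1)^{n+1}$, all of one sign.

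The gap is the normalization. You flag it as the main point but leave it open, and both things you say about it are wrong.
\begin{enumerate}
\item Rescaling $g$ to $g/b_1$ and ``absorbing'' $\arg b_1$ does not produce a Carathéodory function. Write $b_1=|b_1|e^{i\alpha}$ with $\alpha\neq 0$. The new cofactor is $b_1q$, where $q=zf'/g$. It equals $1$ at the origin, but it takes values in the rotated half-plane $e^{i\alpha}\{\operatorname{Re}w>0\}$. So $|c_j|\le 2$ does not follow from the theorem you cite.
\item The normalized $p=(q-i\operatorname{Im}q(0))/\operatorname{Re}q(0)$ has coefficients bounded by $2$. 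It is the coefficients of $q$, not $p$, that are bounded by $2\operatorname{Re}q(0)$.
\item The fallback of choosing ``a normalization of $b_1$ that makes the estimate come out cleanly'' is not an argument. Rotating $g$ rotates $zf'/g$ and destroys $\operatorname{Re}(zf'/g)>0$. So the case of non-real $b_1$ has to be estimated, not normalized away.
\end{enumerate}

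The missing estimate takes one line. Keep $g$ unnormalized and write $q=zf'/g=\sum_{j\ge 0}q_jz^j$ with $q_0=1/b_1$. Then
\[
na_n=\frac{b_n}{b_1}+\sum_{k=1}^{n-1}b_kq_{n-k}.
\]
Since $g/b_1$ is a normalized convex function, $|b_k|\le |b_1|$. Since $\operatorname{Re}q>0$, $|q_j|\le 2\operatorname{Re}q_0=2\operatorname{Re}(b_1)/|b_1|^2$ for $j\ge 1$. Hence $|b_n/b_1|\le 1$, and each cross term is at most $2\operatorname{Re}(b_1)/|b_1|=2\cos(\arg b_1)\le 2$. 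This gives
\[
n|a_n|\le 1+2(n-1)\cos(\arg b_1)\le 2n-1 .
\]
In your normalization this says the same thing: $e^{-i\alpha}\,b_1q=|b_1|q$ has positive real part and value $e^{-i\alpha}$ at $0$. So the coefficients of $b_1q$ are bounded by $2\cos\alpha$, which is at most $2$, not by $2$ via Carathéodory directly.

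The extra factor also shows that for $n\ge 2$ equality forces $b_1>0$. This is consistent with your extremal pair.
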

\begin{lemma}\label{lm2}{\cite{Sil1972}}
If $f \in \mathcal{C}_1$ then
\begin{equation} \label{growth1}
\frac{2r}{1+r} -\log(1+r) \le |f(z)| \le \frac{2r}{1-r} +\log(1-r)
\end{equation}
and
\begin{equation} \label{distortion1}
\frac{1-r}{(1+r)^2} \le |f^{'}(z)| \le \frac{1+r}{(1-r)^2}
\end{equation}
for all $|z| \le r$ and equality holds for $k(z,1,1)$ at $z=\pm r$  where
\begin{equation*}
k(z,x,y)= (1+x) \frac{z}{(1-yz)} + x \bar{y} \log{(1-yz)}
\end{equation*}
for $z \in \Delta$ and $|x|=|y|=1$.
\end{lemma}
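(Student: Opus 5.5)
The plan is to reduce everything to the classical bounds for convex functions and for functions with positive real part. Take $f\in\mathcal{C}_1$ with $f'(0)=1$. By definition there is a convex $g$ of the form \eqref{eq24} such that
\[
zf'(z)=g(z)\,p(z),\qquad \operatorname{Re}p(z)>0 .
\]
I will assume the normalization implicit in \cite{Sil1972}, namely $g'(0)=b_1=1$ and hence $p(0)=1$. This is forced by $f'(0)=b_1p(0)$ once we fix $b_1=1$. This normalization is the point I expect to need the most care. If $b_1$ were only assumed to satisfy $\operatorname{Re} b_1>0$, one would first rescale $g$ by a constant and absorb that constant into $p$, keeping $\operatorname{Re}p>0$. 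I would check that the stated bounds genuinely require $b_1=1$ and $p(0)=1$, since the sharp extremal function below has exactly this normalization.

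For the distortion estimate \eqref{distortion1}, write $|f'(z)|=|g(z)/z|\,|p(z)|$. For $|z|=r$ the classical growth theorem for normalized convex functions gives
\[
\frac{1}{1+r}\le \Big|\frac{g(z)}{z}\Big|\le \frac{1}{1-r}.
\]
The Carathéodory class with $p(0)=1$ gives
\[
\frac{1-r}{1+r}\le |p(z)|\le \frac{1+r}{1-r}.
\]
Multiplying these yields
\[
\frac{1-r}{(1+r)^2}\le|f'(z)|\le\frac{1+r}{(1-r)^2}.
\]
By the maximum and minimum modulus principles this extends to $|z|\le r$; for the lower bound, note that $f'$ does not vanish.

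For the growth estimate \eqref{growth1}, the upper bound follows by integrating along the radius:
\[
|f(z)|\le\int_0^r|f'(te^{i\theta})|\,dt\le\int_0^r\frac{1+t}{(1-t)^2}\,dt=\frac{2r}{1-r}+\log(1-r),
\]
since $\frac{d}{dt}\big(\tfrac{2t}{1-t}+\log(1-t)\big)=\tfrac{1+t}{(1-t)^2}$. For the lower bound I will use that $f$ is univalent, because $\mathcal{C}_1\subset\mathcal{C}$. Fix $|z_0|=r$ and let $\Gamma=f^{-1}([0,f(z_0)])$, a curve joining $0$ to $z_0$. On $\Gamma$, $|\zeta|$ runs through all values in $[0,r]$, so
\[
|f(z_0)|=\int_\Gamma|f'(\zeta)|\,|d\zeta|\ge\int_0^r\frac{1-t}{(1+t)^2}\,dt=\frac{2r}{1+r}-\log(1+r).
\]
Here I use the derivative check $\frac{d}{dt}\big(\tfrac{2t}{1+t}-\log(1+t)\big)=\tfrac{1-t}{(1+t)^2}$.

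For sharpness, take $k(z,1,1)=\frac{2z}{1-z}+\log(1-z)$. Then
\[
zk'(z)=\frac{z}{1-z}\cdot\frac{1+z}{1-z},
\]
where $g(z)=z/(1-z)$ is convex and $p(z)=(1+z)/(1-z)$, so $k(\cdot,1,1)\in\mathcal{C}_1$. Its derivative is $k'(z)=(1+z)/(1-z)^2$, so $k'(r)$ and $|k'(-r)|$ attain the two bounds in \eqref{distortion1}. Likewise $k(r)$ and $|k(-r)|=\frac{2r}{1+r}-\log(1+r)$ attain the two bounds in \eqref{growth1}; the last value is positive for $0<r<1$.
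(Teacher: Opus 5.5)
The paper gives no proof of this lemma (it is quoted from \cite{Sil1972}), so your argument has to stand on its own. The overall route is the standard one: factor $zf'=gp$, multiply the growth bound for $g(z)/z$ by the Carath\'eodory bound, then integrate. Your computations are correct, namely both antiderivatives and the check that $k(z,1,1)$, with $g=z/(1-z)$ and $p=(1+z)/(1-z)$, attains all four bounds.

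The genuine gap is the normalization. In this paper $\mathcal{C}_1$ is defined with $g(z)=b_1z+\cdots$ and only $\operatorname{Re}b_1>0$, so non-real $b_1$ is allowed. For instance, this is how the family $k(z,x,y)$ with $x=e^{2i\alpha}\neq\pm1$ belongs to the class, via $g=e^{i\alpha}z/(1-yz)$. Your proposed reduction does not work in that case. If $g=b_1g_0$ with $g_0$ normalized convex, the function you must bound is $q=zf'/g_0=b_1\,(zf'/g)$, and $\operatorname{Re}q$ need not be positive when $b_1\notin\mathbb{R}$. Rescaling by a positive constant does preserve $\operatorname{Re}p>0$, but it only normalizes $|b_1|$. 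What you do have is $q(0)=1$ and $\operatorname{Re}(e^{-i\alpha}q)=|b_1|\operatorname{Re}(zf'/g)>0$, where $\alpha=\arg b_1\in(-\pi/2,\pi/2)$. Hence $q\prec L(z)=(1+e^{2i\alpha}z)/(1-z)$, the M\"obius map of $\mathbb{D}$ onto that half-plane with $L(0)=1$. Since
\[
\frac{1-r}{1+r}\le\left|\frac{1+x\zeta}{1-\zeta}\right|\le\frac{1+r}{1-r}\qquad(|\zeta|\le r,\ |x|=1),
\]
Schwarz's lemma gives for $|q(z)|$ exactly the bounds you used for $|p(z)|$. Then $|f'(z)|=|g_0(z)/z|\,|q(z)|$ yields \eqref{distortion1} on the whole class. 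So the bounds do not need $b_1=1$, but it is this subordination step, not the rescaling you describe, that proves it.

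The second, smaller gap is in the lower growth bound. You take $\Gamma=f^{-1}([0,f(z_0)])$ for an arbitrary $z_0$ with $|z_0|=r$, invoking only univalence. Univalence does not guarantee that the segment $[0,f(z_0)]$ lies in $f(\mathbb{D})$. Instead choose $z_0$ with $|f(z_0)|=\min_{|z|=r}|f(z)|$. Then $f(z_0)$ is a point of the Jordan curve $f(\{|z|=r\})$ nearest to $0$, so $[0,f(z_0))\subset f(\{|z|<r\})$. Your integral estimate then applies to this $z_0$ and bounds $|f(z)|$ from below for every $|z|=r$. Note that this lower bound holds only on $|z|=r$, not on all of $|z|\le r$ as the statement literally says, since it fails at $z=0$; your argument correctly proves the version on the circle.
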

\noindent
We begin by determining the sharp Bohr radius associated with the Bohr inequality for analytic functions $f(z)$. In this modified form of the inequality, the classical coefficients $|a_0|$ and $|a_1|$ are replaced by $|f(z)|$ and $|f^{'}(z)|$, respectively, allowing us to explore a functional analogue of the traditional Bohr-type estimate.
\begin{theorem}
Suppose that $f(z)=z + \sum_{n=2}^{\infty} a_n z^n \in \mathcal{C}_1$, then
\begin{equation}\label{2.1.1}
|f(z)| + |f^{'}(z)||z| + \sum_{n=2}^{\infty} |a_n z^n| \le d( f(0), \partial{f(\mathbb{D})})
\end{equation}
for $|z| \le r_{11}$, where $r_{11}=0.110377$ is the solution of
\begin{equation*}
    \log2-1-r (-6+r (2+r-\log2)+\log4)+2 (1-r)^2 \log(1-r) =0.
\end{equation*}
The result is sharp.
\end{theorem}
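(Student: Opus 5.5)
The plan has three parts: a lower bound for the right-hand side, upper bounds for each term on the left, and a sharpness check.

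\textbf{Lower bound for the distance.} Since $f(0)=0$, the lower growth estimate \eqref{growth1} of Lemma \ref{lm2}, letting $r\to1^-$, gives
\[
d(f(0),\partial f(\mathbb{D}))=\liminf_{|z|\to1}|f(z)|\ge 1-\log 2 .
\]

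\textbf{Upper bounds for the left-hand side.} For $|z|=r$ we estimate the three terms separately.
\begin{itemize}
\item For $|f(z)|$ we use the upper growth estimate in \eqref{growth1}: $|f(z)|\le \frac{2r}{1-r}+\log(1-r)$.
\item For $|f'(z)||z|$ we use the upper distortion estimate in \eqref{distortion1}: $|f'(z)||z|\le \frac{r(1+r)}{(1-r)^2}$.
\item For the tail we use the coefficient bound \eqref{coef1} of Lemma \ref{lm1}:
\[
\sum_{n\ge2}|a_n|r^n\le \sum_{n\ge2}\Bigl(2-\frac1n\Bigr)r^n=\frac{2r^2}{1-r}+\log(1-r)+r .
\]
\end{itemize}
Adding these gives
\[
\Phi(r):=\frac{2r}{1-r}+\frac{r(1+r)}{(1-r)^2}+\frac{2r^2}{1-r}+r+2\log(1-r).
\]
It then suffices to show $\Phi(r)\le 1-\log2$ for $r\le r_{11}$. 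Multiplying $\Phi(r)-(1-\log 2)\le0$ by $(1-r)^2$ and simplifying should give exactly the displayed equation defining $r_{11}$. I would verify this algebra carefully, since matching the paper's form of the polynomial–log expression is the fiddly part.

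Each summand of $\Phi$ is increasing on $(0,1)$ except $2\log(1-r)$. However, $\Phi'(r)\ge 2+1-\frac{2}{1-r}+\dots>0$ for small $r$, and the derivative of the rational part dominates, so $\Phi$ is increasing. Since $\Phi(0)=0<1-\log2$ and $\Phi(r)\to\infty$ as $r\to1$, there is a unique root $r_{11}$, and the inequality \eqref{2.1.1} holds for $r\le r_{11}$.

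\textbf{Sharpness.} I will use the extremal function. The subtle point is that the growth and distortion extremal in Lemma \ref{lm2} is $k(z,1,1)$, while the coefficient extremal in Lemma \ref{lm1} is $k(z,1,-1)$; these must be reconciled. Take $f(z)=k(z,1,1)=\frac{2z}{1-z}+\log(1-z)$, whose coefficients are $a_n=2-\frac1n$, all positive. At $z=r$ we get
\begin{itemize}
\item $|f(r)|=\frac{2r}{1-r}+\log(1-r)$,
\item $f'(r)=\frac{1+r}{(1-r)^2}$,
\item $\sum|a_n|r^n$ equal to the tail bound above.
\end{itemize}
So all three estimates are attained simultaneously at $z=r$. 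Moreover, $f(-r)=-\frac{2r}{1+r}+\log(1+r)$, and as $r\to1$ this tends to $-(1-\log2)$. The image domain contains the segment out to distance $1-\log2$ at the point $-(1-\log 2)$, so $d(0,\partial f(\mathbb{D}))=1-\log2$.

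To confirm this distance, one checks that the boundary point nearest $0$ is the one approached as $z\to-1$. This is the main obstacle, and I would argue it from the lower growth bound being attained along $z=-r$. Then for $r>r_{11}$ the left side exceeds the right side, so $r_{11}$ cannot be improved.
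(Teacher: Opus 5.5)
Your proposal is correct and follows the paper's proof step for step. Both use the same lower bound $d(f(0),\partial f(\mathbb{D}))\ge 1-\log 2$ and the same three upper estimates summed into $\Phi$; multiplying $\Phi(r)-(1-\log 2)$ by $(1-r)^2$ does give exactly the paper's expression, and both use the same extremal function up to rotation (you take $k(z,1,1)$ at $z=r$, the paper takes $\frac{2z}{1+z}-\log(1+z)=k(z,1,-1)$ at $z=-r$). Your monotonicity step is hand-waved, but it is immediate: $\Phi$ is the sum of the increasing growth bound $\frac{2r}{1-r}+\log(1-r)$ (derivative $\frac{1+r}{(1-r)^2}$), the increasing term $\frac{r(1+r)}{(1-r)^2}$, and a power series with positive coefficients, and the paper verifies it through the explicit derivative $\frac{2+6r-5r^2+r^3}{(1-r)^3}>0$.
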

\begin{proof}
We begin the proof by evaluating $d(f(0), \partial{f(\Delta)})$.
From growth estimate of $f(z) \in \mathcal{C}_1$ in equation ($\ref{growth1}$), we get
\begin{equation*}
d(f(0), \partial{f(\mathbb{D})}) = \liminf \limits_{|z| \rightarrow 1} |f(z)-f(0)| \ge \lim \limits_{r \rightarrow 1} \left(\frac{2r}{1+r}- \log(1+r)\right)
\end{equation*}
which gives
\begin{equation}\label{eq1}
d(f(0), \partial{f(\mathbb{D})}) \ge (1-\log{2}).
\end{equation}
For $|z|\leq r$, by $(\ref{coef1})$, $(\ref{growth1})$, $(\ref{distortion1})$ , we have
\begin{equation}\label{eq3}
|f(z)| + |f^{'}(z)||z| + \sum_{n=2}^{\infty} |a_n| |z|^n\leq \frac{2r}{1-r} + \log(1-r) + \frac{r(1+r)}{(1-r)^2} + \sum_{n=2}^{\infty} \left( 2-\frac{1}{n} \right) r^{n}
\end{equation}
Define a function $P:[0, 1)\rightarrow \mathbb{R}$ by
\begin{equation*}
P(r)=  \log2-1-r (-6+r (2+r-\log2)+\log4)+2 (1-r)^2 \log(1-r).
\end{equation*}
It can be easily seen that $P(0)=\log2-1 \simeq-0.306853< 0$ and $P(1/2)={11}/{8}-{\log2}/{4}\simeq1.20171 > 0$. Therefore by intermediate value theorem $P(r)$ has  a  root in $ (0,1/2)$. Next to show  the uniqueness of root in $(0, 1/2)$, it is sufficient to show that the function $P$ is a strictly increasing function in $(0,1/2)$.
 It is easy to see that  $P'(r)=(2+6 r-5 r^2+r^3)/{(1-r)^3}>(2+6 r-5 r+r^3)/{(1-r)^3}=(2+ r+r^3)/{(1-r)^3}>0$ for all $r \in (0,1)$, which shows that $P$ is strictly increasing function.  Therefore, $P(r)$ has the unique root in $(0, 1/2)$, say $r_{11}=0.110377$.
  Thus,
  \begin{equation}\label{eq4}
  P(r_{11})=  \log2-1-r_{11} (-6+r_{11} (2+r_{11}-\log2)+\log4)+2 (1-r_{11})^2 \log(1-r_{11})=0
  \end{equation}
   and $P(r)\leq0$ for $r\in(0, r_{11})$.
\noindent
 In view of the above argument together with   \eqref{eq1}   and \eqref{eq3}, for  $|z| \le r_{11}$, we have
\begin{align*}\label{eq2}
|f(z)| + &|f^{'}(z)||z| + \sum_{n=2}^{\infty} |a_n| |z|^n-d(f(0), \partial{f(\mathbb{D})})\\
& \le \frac{2r}{1-r} + \log(1-r) + \frac{r(1+r)}{(1-r)^2} + \sum_{n=2}^{\infty} \left( 2-\frac{1}{n} \right) r^{n}-(1-\log2)\\
&= \frac{\log2-1-r (-6+r (2+r-\log2)+\log4)+2 (1-r)^2 \log(1-r)}{(1 - r)^2}\\
&\leq0
\end{align*}
\begin{figure}
  \centering
  \includegraphics[width=8cm]{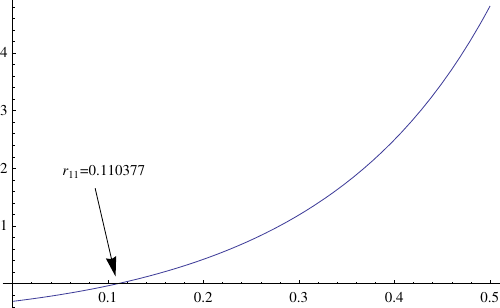}\\
  \caption{The radius $r_{11}=0.110377$ is sharp}
\end{figure}
\noindent
To prove the sharpness, we assume
\begin{equation*}
    f(z)=\frac{2z}{(1+z)}- \log(1+z).
\end{equation*}
Note that the power series expansion of $f(z)$ is given by
\begin{equation*}
    f(z)= \sum_{n=1}^{\infty} (-1)^{n+1} \left(2-\frac{1}{n} \right) z^n.
\end{equation*}
At $z=-r=-r_{11}$, by \eqref{eq4}, we have
\begin{align*}
|f(z)| + |f^{'}(z)||z| + &\sum_{n=2}^{\infty} |a_n| |z|^n\\
&= \left|\frac{2z}{(1+z)}- \log(1+z) \right| +  \left| \frac{(1-z)z}{(1+z)^2} \right|  +   \sum_{n=2}^{\infty} \left| (-1)^{n+1} \left(2-\frac{1}{n} \right)z^n \right| \\
& =  \left|\frac{2(-r)}{(1-r)}- \log(1-r) \right| + \left| \frac{-r(1+r)}{(1-r)^2} \right|  +   \sum_{n=2}^{\infty}  \left|2-\frac{1}{n} \right| |-r|^n\\
&=  \frac{2r}{(1-r)}+\log(1-r) +  \frac{r(1+r)}{(1-r)^2}   +   \sum_{n=2}^{\infty}  \left(2-\frac{1}{n} \right) |r|^n\\
&=1-\log 2.\\
&=\liminf \limits_{|z| \rightarrow 1} \left(\frac{2z}{1+z}- \log(1+z)\right)\\
&=d(f(0), \partial{f(\mathbb{D})}).
\end{align*}
Hence, the result is sharp.
\end{proof}
\noindent
Next, by considering powers of the coefficient $(|a_n|)$, we evaluate the corresponding sharp Bohr radii for the Bohr inequality for class $\mathcal{C}_1$. This extension allows us to analyze how the powers of the moduli of the coefficients influence the Bohr-type bounds, thereby enriching the understanding of Bohr phenomena in the class $\mathcal{C}_1$.
\begin{theorem}
Suppose that $f(z)=z + \sum_{n=2}^{\infty} a_n z^n \in \mathcal{C}_1$, then
\begin{equation} \label{2.1.2}
|z|+ \sum_{n=2}^{\infty} |a_n z^n| + \sum_{n=2}^{\infty} |a_n|^{p} |z|^{np}  \le d(f(0), \partial{f(\mathbb{D})})
\end{equation}
for $|z| \le r_p$ and $p \ge 1$, where $r_p$ is the solution of
\begin{equation*}
(1-r)\sum_{n=2}^{\infty} \left( 2-\frac{1}{n} \right)^{p} r^{pn}- (1-3 r+r \log2-\log(2-2 r)+r \log(1-r))=0.
\end{equation*}
The result is sharp.
\end{theorem}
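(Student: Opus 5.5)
The plan is to follow the scheme of the previous theorem. We bound the left-hand side of \eqref{2.1.2} from above using the coefficient estimate \eqref{coef1} of Lemma~\ref{lm1}. We bound the right-hand side from below using the growth estimate \eqref{growth1} of Lemma~\ref{lm2}. Then we show that the resulting difference is nonpositive exactly up to the root $r_p$.

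\emph{Step 1 (the two bounds).} By \eqref{eq1}, $d(f(0),\partial f(\mathbb{D}))\ge 1-\log 2$ for every $f\in\mathcal{C}_1$. For $|z|=r$, \eqref{coef1} gives
\begin{equation*}
|z|+\sum_{n=2}^{\infty}|a_n||z|^n+\sum_{n=2}^{\infty}|a_n|^p|z|^{np}\le r+\sum_{n=2}^{\infty}\Bigl(2-\frac1n\Bigr)r^n+S_p(r),\qquad S_p(r):=\sum_{n=2}^{\infty}\Bigl(2-\frac1n\Bigr)^p r^{pn}.
\end{equation*}
Summing the middle series in closed form gives $\sum_{n\ge2}(2-1/n)r^n=\frac{2r^2}{1-r}+r+\log(1-r)$. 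So the upper bound equals $2r+\frac{2r^2}{1-r}+\log(1-r)+S_p(r)$.

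\emph{Step 2 (reduction to the stated equation).} Subtract $1-\log2$ and multiply by $1-r>0$. A direct expansion, using $\log(2-2r)=\log2+\log(1-r)$, shows that the difference has the sign of
\begin{equation*}
Q_p(r):=(1-r)S_p(r)-\bigl(1-3r+r\log2-\log(2-2r)+r\log(1-r)\bigr).
\end{equation*}
This is exactly the function in the statement. We check that $Q_p(0)=-(1-\log2)<0$.

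\emph{Step 3 (existence and uniqueness of $r_p$).} This is the main obstacle. For existence, let $r\to1^-$. The subtracted bracket equals $1-3r+r\log 2-\log 2-(1-r)\log(1-r)$, which tends to $-2$. Meanwhile $(1-r)S_p(r)\ge0$, so $Q_p(r)\to$ a value $\ge2>0$, and the intermediate value theorem gives a root.

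For uniqueness, we would show that $Q_p$ is increasing on $(0,1)$.
\begin{itemize}
\item The bracket term contributes $-\frac{d}{dr}(\cdots)=3-\log2-\log(1-r)-1$. Here $\frac{d}{dr}\bigl[-(1-r)\log(1-r)\bigr]=\log(1-r)+1$, so this contribution equals $2-\log2-\log(1-r)>0$.
\item The derivative of $(1-r)S_p(r)$ is $-S_p+(1-r)S_p'$. Its negative part $-S_p$ must be controlled.
\end{itemize}
We would first try to show that on the relevant range (where $Q_p\le0$ can hold, namely small $r$, say $r<1/3$) the term $S_p$ is small compared with $2-\log2$. Since $p\ge1$ and $r<1$, we have $S_p(r)\le S_1(r)$, and $S_1$ is explicit. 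Should this not suffice, we would instead argue directly that $Q_p(r)\le 0$ for $r\le r_p$ and $Q_p>0$ afterwards, using the fact that $Q_p(r)+\bigl(\text{bracket}\bigr)$ and the bracket have opposite monotonicity on the range where the root can occur.

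\emph{Step 4 (sharpness).} We take the extremal function $f(z)=k(z,1,1)=\frac{2z}{1-z}+\log(1-z)=\sum_{n\ge1}(2-\frac1n)z^n$, whose coefficients are all positive and attain \eqref{coef1}. Equality in \eqref{growth1} at $z=-r$, letting $r\to1$, gives $d(f(0),\partial f(\mathbb{D}))=1-\log2$, exactly as in the previous proof (there the rotated function $2z/(1+z)-\log(1+z)$ was used). At $z=r_p$ all inequalities in Step 1 are equalities, so the left side of \eqref{2.1.2} equals $1-\log2$. For $r>r_p$ it exceeds $1-\log 2$, so $r_p$ cannot be improved.
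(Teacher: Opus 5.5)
Your Steps 1, 2 and 4 are correct and follow the paper. You bound the left side with the coefficient estimate \eqref{coef1}, you get $d(f(0),\partial f(\mathbb{D}))\ge 1-\log 2$ from \eqref{growth1}, and you use an extremal function with $|a_n|=2-\frac1n$ and $d=1-\log 2$; your unrotated $k(z,1,1)$ works as well as the paper's $\frac{2z}{1+z}-\log(1+z)$.

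The gap is Step 3, which is a plan rather than a proof. You never show that $Q_p$ changes sign only once. That fact is needed for the inequality on all of $[0,r_p]$, and also for your claim in Step 4 that the bound exceeds $1-\log 2$ for every $r>r_p$. Multiplying by $1-r$ hides the monotonicity, and your fallbacks do not close the gap:
\begin{itemize}
\item $S_p(r)\le S_1(r)$ does not follow from $p\ge1$ and $r<1$. It needs $(2-\frac1n)r^n\le 1$ for all $n$. For instance, $S_p(0.9)\ge(1.5\cdot 0.81)^p=(1.215)^p$, which exceeds the finite value $S_1(0.9)$ for large $p$.
\item Restricting to $r<1/3$ presupposes that $Q_p>0$ on $[1/3,1)$, which you do not prove.
\item The ``opposite monotonicity'' argument requires $(1-r)S_p(r)$ to be increasing, and that is exactly the point in question.
\end{itemize}

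The missing observation is simple. Since $1-r>0$, $Q_p$ has the same sign on $[0,1)$ as
\[
\frac{Q_p(r)}{1-r}=r+\sum_{n=2}^{\infty}\Bigl(2-\frac1n\Bigr)r^n+S_p(r)-(1-\log 2).
\]
This is a power series with nonnegative coefficients minus a constant, so it is strictly increasing on $[0,1)$. It equals $-(1-\log 2)<0$ at $r=0$ and $1+S_p(1/2)>0$ at $r=1/2$. This is exactly the paper's argument. It gives at once a unique root $r_p\in(0,1/2)$, nonpositivity on $[0,r_p]$, and strict positivity for $r>r_p$. That last fact is what your sharpness step silently uses.
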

\begin{proof}
By Lemma \ref{lm2}, it follows that
\begin{equation}\label{eq5}
d(f(0), \partial{f(\mathbb{D})})= \liminf \limits_{|z| \rightarrow 1} |f(z)-f(0)|
 \ge \lim \limits_{r \rightarrow 1} \left(\frac{2r}{1+r}- \log(1+r)\right)
\end{equation}
which gives $d(f(0), \partial{f(\mathbb{D})}) \ge (1-\log{2})$ for $f \in \mathcal{C}_1$. By Lemma \ref{lm1},  for $|z| \le r$ and $p \ge 1$, we have
\begin{align*}
|z|+ \sum_{n=2}^{\infty} |a_n| |z|^n&+ \sum_{n=2}^{\infty} |a_n|^{p} |z|^{np} \le r+ \sum_{n=2}^{\infty} \left( 2-\frac{1}{n} \right) r^{n}+ \sum_{n=2}^{\infty} \left( 2-\frac{1}{n} \right)^{p} r^{pn}\\
&=\sum_{n=2}^{\infty} \left( 2-\frac{1}{n} \right)^{p} r^{pn}- \frac{(1-3 r+r \log2-\log(2-2 r)+r \log(1-r))}{1-r}.
\end{align*}
Let  $Q$ be a function  defined by
\begin{align*}
Q(r)=\sum_{n=2}^{\infty} \left( 2-\frac{1}{n} \right)^{p} r^{pn}- \frac{1-3 r+r \log2-\log(2-2 r)+r \log(1-r)}{1-r}, \quad r\in [0, 1).
\end{align*}
Since, $Q(0)=-(1-\log 2)<0$ and
 \[
 Q(1/2)
 = 1+ \sum_{n=2}^{\infty} \left( 2-\frac{1}{n} \right)^{p} \frac{1}{2^{pn}}> 0
\]
by the Intermediate Value Theorem it follows that $Q(r)=0$ has a root in $(0, 1/2)$.
The uniqueness of the root $r$ in $(0, 1/2)$ follows from the fact that the derivative of the  function \[Q'(r)=1+ \sum_{n=2}^{\infty} \left( 2-\frac{1}{n} \right)n r^{n-1}+ \sum_{n=2}^{\infty} \left( 2-\frac{1}{n} \right)^{p}pn r^{p n-1}>0.\]
 Thus,
\begin{equation*}
\frac{2 r+(1-r) \log(1-r)}{1-r}+\sum_{n=2}^{\infty} \left( 2-\frac{1}{n} \right)^{p} r^{pn} \le  (1 - \log{2}).
\end{equation*}
 for $r \le r_p$ and $p \ge 1$, where $r_p$ is the solution of the equation $Q(r)=0$ in $(0, 1).$
To prove the sharpness, we assume
\begin{equation}\label{eq7}
    f(z)=\frac{2z}{(1+z)}- \log(1+z)= \sum_{n=1}^{\infty} (-1)^{n+1} \left(2-\frac{1}{n} \right) z^n.
\end{equation}
For  $z=r=r_p$, we have
\begin{align*}
|z|+ \sum_{n=2}^{\infty} |a_n z^n|& + \sum_{n=2}^{\infty} |a_n|^{p} |z|^{np}\\
&=|z|+ \sum_{n=2}^{\infty} \left| (-1)^{n+1} \left(2-\frac{1}{n} \right) z^n \right| + \sum_{n=2}^{\infty} {\left| (-1)^{n+1} \left(2-\frac{1}{n} \right) \right| }^p |z|^{np}\\
& =   |r|+ \sum_{n=2}^{\infty} \left| \left(2-\frac{1}{n} \right) r^n \right| + \sum_{n=2}^{\infty} {\left| \left(2-\frac{1}{n} \right) \right| }^p |r|^{np}\\
&= r+ \sum_{n=2}^{\infty}  \left(2-\frac{1}{n} \right) r^n  + \sum_{n=2}^{\infty} { \left(2-\frac{1}{n} \right)  }^p r^{np}\\
&=\frac{2 r+(1-r) \log(1-r)}{1-r}+\sum_{n=2}^{\infty} { \left(2-\frac{1}{n} \right)  }^p r^{np}\\
&=1-\log 2\\
& =\lim \limits_{r \rightarrow 1} \left(\frac{2r}{1+r}- \log(1+r)\right)\\
&= d(f(0), \partial{f(\mathbb{D})})
\end{align*}
Hence, the result is sharp.
\end{proof}

\begin{remark}
For different values of $p$ the radius converges to $0.215585$ upto $5$ decimal places as evident from the table given below
\end{remark}
\begin{table}[h!]
	\begin{center}
		\begin{tabular}{ |c|c|c| }
			\hline
			S. No. & p & $r_p$\\
			\hline
			1 & 2 & 0.213087\\
			\hline
			2 & 3 & 0.215411 \\
			\hline
			3 & 4 & 0.215573\\
			\hline
			4 & 5 & 0.215584\\
			\hline
			5 & 6& 0.215584\\
			\hline
			6 & 7& 0.215585\\
			\hline
			7& 8& 0.215585\\
			\hline
		\end{tabular}
		\caption{\textbf{Calculation of the  radius $r_p$ for various values of $p$.}}
		\label{Table:1}
	\end{center}
\end{table}
\noindent
In the  next two theorems \ref{th3} and \ref{th4}, we find the  Rogosinski's inequality and Rogosinski's radius for the class $\mathcal{C}_1$.
\begin{theorem}\label{th3}
Suppose that $f(z)=z + \sum_{n=2}^{\infty} a_n z^n \in \mathcal{C}_1$ and $N\geq2$ is an integer, then
\begin{equation} \label{2.1.3}
|f(z)|+ \sum_{n=N}^{\infty} |a_n z^n|  \le d(f(0), \partial{f(\mathbb{D})})
\end{equation}
for $|z| \le r_{1,N}$, where $r_{1,N}$ is the solution of
\begin{equation*}
\frac{2r}{1-r} +\log(1-r) + \sum_{n=N}^{\infty} \left( 2-\frac{1}{n} \right) r^{n}- (1 - \log{2})=0.
\end{equation*}
The result is sharp.
\end{theorem}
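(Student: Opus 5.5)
The plan is to follow the same template as the two preceding theorems: bound the left-hand side of \eqref{2.1.3} from above by an explicit function of $r=|z|$, and bound the right-hand side from below by the constant $1-\log 2$.

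\textbf{Lower bound for the distance.} This is exactly \eqref{eq5}. Since $f(0)=0$, the lower growth estimate in \eqref{growth1} gives
\[
d(f(0),\partial f(\mathbb{D}))=\liminf_{|z|\to1}|f(z)|\ \ge\ \lim_{r\to1}\left(\frac{2r}{1+r}-\log(1+r)\right)=1-\log 2 .
\]

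\textbf{Upper bound for the left-hand side.} For $|z|\le r$, the upper growth estimate in \eqref{growth1} together with the coefficient bound \eqref{coef1} of Lemma \ref{lm1} give
\[
|f(z)|+\sum_{n=N}^{\infty}|a_n||z|^n\le \frac{2r}{1-r}+\log(1-r)+\sum_{n=N}^{\infty}\left(2-\frac1n\right)r^n .
\]
Define
\[
R_N(r)=\frac{2r}{1-r}+\log(1-r)+\sum_{n=N}^{\infty}\left(2-\frac1n\right)r^n-(1-\log 2), \qquad 0\le r<1 .
\]
It then suffices to show that $R_N$ has a unique zero $r_{1,N}$ in $(0,1)$ and that $R_N\le 0$ on $[0,r_{1,N}]$.

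\textbf{The zero of $R_N$.} This is the only step needing care.
\begin{itemize}
\item At the left endpoint, $R_N(0)=\log 2-1<0$.
\item At the right endpoint, $\frac{2r}{1-r}+\log(1-r)\to\infty$ as $r\to1^-$, because $\frac{2r}{1-r}$ dominates $\log(1-r)$. So $R_N\to+\infty$, and a root exists. If a finite test point is preferred, $R_N(1/2)\ge 2-\log 2-(1-\log 2)=1>0$ works for every $N$.
\item For monotonicity, $\frac{d}{dr}\bigl(\frac{2r}{1-r}+\log(1-r)\bigr)=\frac{2}{(1-r)^2}-\frac{1}{1-r}=\frac{1+r}{(1-r)^2}>0$. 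The series part has nonnegative coefficients, so its derivative is nonnegative. Hence $R_N$ is strictly increasing, the root is unique, and $R_N(r)\le0$ for $r\le r_{1,N}$.
\end{itemize}
Combining the two bounds proves \eqref{2.1.3} for $|z|\le r_{1,N}$.

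\textbf{Sharpness.} I will use the extremal function
\[
f(z)=\frac{2z}{1+z}-\log(1+z)=\sum_{n\ge1}(-1)^{n+1}\left(2-\frac1n\right)z^n
\]
from \eqref{eq7}, evaluated at $z=-r$.
\begin{itemize}
\item At this point $f(-r)=-\frac{2r}{1-r}-\log(1-r)$, so $|f(-r)|=\frac{2r}{1-r}+\log(1-r)$. This quantity is positive because its derivative is positive and it vanishes at $0$.
\item The tail is $\sum_{n\ge N}|a_n|r^n=\sum_{n\ge N}(2-\frac1n)r^n$.
\item Its distance satisfies $d(f(0),\partial f(\mathbb{D}))=1-\log 2$. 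The point $f(1)=1-\log 2$ lies on the boundary image, and the lower growth bound shows that no boundary point is closer.
\end{itemize}
Therefore at $r=r_{1,N}$ the left-hand side equals the right-hand side, and for $r>r_{1,N}$ strict monotonicity of $R_N$ makes it exceed the distance. So the radius cannot be improved.

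The main obstacle I expect is confirming that this $f$ belongs to $\mathcal{C}_1$ and attains the distance exactly. I will justify membership by citing that it is the rotation $k(-z,1,1)$-type extremal of Lemma \ref{lm2}.
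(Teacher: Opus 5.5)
Your proposal is correct and follows the paper's proof essentially step for step. It uses the lower bound $1-\log 2$ for the distance from \eqref{eq5}, the majorant from \eqref{coef1} and \eqref{growth1}, a sign-change plus monotonicity argument for the unique root $r_{1,N}$, and sharpness via the function in \eqref{eq7} at $z=-r$. Your monotonicity check is slightly cleaner than the paper's explicit formula for $R'(r)$: the closed-form part has derivative $\frac{1+r}{(1-r)^2}>0$ and the tail has nonnegative coefficients. Your bound $R_N(1/2)\ge 1$ also avoids the small slip in the paper's displayed value of $R(1/2)$, which drops the $-\log 2$ term.
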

\begin{proof} Let $f\in  \mathcal{C}_1$. Using
  coefficients bound  ($\ref{coef1}$) and growth estimate ($\ref{growth1}$), for $|z| \le r$, we have
\begin{equation}\label{eq6}
|f(z)|+ \sum_{n=N}^{\infty} |a_n z^n|  \le
\frac{2r}{1-r} +\log(1-r) + \sum_{n=N}^{\infty} \left( 2-\frac{1}{n} \right) r^{n}.
\end{equation}
By \eqref{eq5} and \eqref{eq6}, the  inequality  holds
\begin{equation*}
|f(z)|+ \sum_{n=N}^{\infty} |a_n z^n|  \le  d(f(0), \partial{f(\mathbb{D})})
\end{equation*}
if and only if \begin{equation*}
\frac{2r}{1-r} +\log(1-r) + \sum_{n=N}^{\infty} \left( 2-\frac{1}{n} \right) r^{n} \le (1 - \log{2}),\quad |z|\leq r.
\end{equation*}
Let the function $R:[0, 1)\rightarrow \mathbb{R}$ be defined by
\begin{equation*}
R(r)=\frac{2r}{1-r} +\log(1-r) + \sum_{n=N}^{\infty} \left( 2-\frac{1}{n} \right) r^{n}- (1 - \log{2}).
\end{equation*}
Note that $R(0)=\log2-1<0$ and \[R\left(\frac{1}{2}\right)=2+\sum_{n=N}^{\infty} \left( 2-\frac{1}{n} \right) r^{n}- (1 - \log{2})>0.\]
Using the Intermediate Value Theorem, there exists a root $r\in(0, 1/2)$  of the equation $R(r)=0$. Also,
\begin{align*}
R'(r)&=\frac{2}{(1-r)^2}-\frac{1}{1-r}+\frac{2 r^{-1+N} (N+r-N r)}{(-1+r)^2}-\frac{r^{-1+N}}{1-r}\\
&=\frac{r+r^2+(2 N-1) r^N+(3-2 N) r^{1+N}}{(1-r)^2 r}\\
&\geq0.
\end{align*}
Therefore, the equation $R(r)=0$ has a unique root, say $r_{1,N}$. Thus, the  equation $(\ref{2.1.3})$ holds for $r \le r_{1,N}$.
The radius is sharp for the function $f$ defined in $\eqref{eq7}$.%
\end{proof}
\noindent
Next, we state the second result determining the sharp Bohr-Rogosinski radius for the class $\mathcal{C}_1$.
\begin{theorem}\label{th4}
Let $f(z)=z + \sum_{n=2}^{\infty} a_n z^n \in \mathcal{C}_1$ and $N\geq2$ be an integer. Then
\begin{equation} \label{2.1.4}
|f(z)|^2+ \sum_{n=N}^{\infty} |a_n z^n|  \le d(f(0), \partial{f(\mathbb{D})})
\end{equation}
for $|z| \le r_{2,N}$, where $r_{2,N}$ is the solution of
\begin{equation*}
\left(\frac{2r}{1-r} +\log(1-r) \right)^2 + \sum_{n=N}^{\infty} \left( 2-\frac{1}{n} \right) r^{n}- (1 - \log{2})=0.
\end{equation*}
The result is sharp.
\end{theorem}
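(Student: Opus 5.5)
The proof will follow the same scheme as Theorem \ref{th3}, with the growth term squared. First, from \eqref{eq5} (that is, the lower growth estimate in Lemma \ref{lm2} letting $r\to1$) we have $d(f(0),\partial f(\mathbb{D}))\ge 1-\log 2$ for every $f\in\mathcal{C}_1$. Next, for $|z|\le r$, the upper bound in \eqref{growth1} gives $|f(z)|\le \frac{2r}{1-r}+\log(1-r)$. This right-hand side is nonnegative, since its Taylor series $\sum_{n\ge1}(2-\frac1n)r^n$ has positive coefficients. Hence squaring preserves the inequality and
\[
|f(z)|^2\le\Big(\frac{2r}{1-r}+\log(1-r)\Big)^2 .
\]
Combining this with the coefficient bound \eqref{coef1} for the tail $\sum_{n\ge N}|a_n|r^n$, the left side of \eqref{2.1.4} is at most $\Phi(r)+(1-\log2)$, where
\[
\Phi(r)=\Big(\frac{2r}{1-r}+\log(1-r)\Big)^2+\sum_{n=N}^{\infty}\Big(2-\frac1n\Big)r^n-(1-\log 2).
\]
So it suffices to show $\Phi(r)\le 0$ for $r\le r_{2,N}$.

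For the root analysis, note first that $\Phi(0)=\log2-1<0$. Also $\Phi(r)\to+\infty$ as $r\to1^-$; concretely, at $r=1/2$ the squared term alone is $(2-\log2)^2\approx1.71>1-\log2$, so $\Phi(1/2)>0$. The intermediate value theorem then yields a root in $(0,1/2)$. For uniqueness I will avoid differentiating a closed form and argue instead that $\Phi$ is strictly increasing on $(0,1)$. Both $u(r)=\frac{2r}{1-r}+\log(1-r)$ and the tail are power series with nonnegative coefficients, so each is nonnegative and increasing. Therefore $u^2$ is increasing, and $\Phi'(r)=2u(r)u'(r)+\sum_{n\ge N}(2-\frac1n)nr^{n-1}>0$. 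It follows that $\Phi\le0$ exactly on $[0,r_{2,N}]$, which proves \eqref{2.1.4} there.

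For sharpness I will use the extremal function \eqref{eq7}, $f(z)=\frac{2z}{1+z}-\log(1+z)$, with coefficients $a_n=(-1)^{n+1}(2-\frac1n)$. For this $f$ we have $d(f(0),\partial f(\mathbb{D}))=1-\log2$, attained as $z\to1$, as already used in the earlier sharpness arguments. At $z=-r$,
\[
f(-r)=-\frac{2r}{1-r}-\log(1-r), \qquad |f(-r)|=\frac{2r}{1-r}+\log(1-r),
\]
and $|a_n|=2-\frac1n$. So at $r=r_{2,N}$ the left side of \eqref{2.1.4} equals exactly $1-\log2$, and for $r>r_{2,N}$ it exceeds this value by monotonicity of $\Phi$.

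The main point requiring care is the squaring step, which needs the nonnegativity of the growth bound. A second point is that $d(f(0),\partial f(\mathbb{D}))=1-\log 2$ for the extremal function; I will justify this via the minimum modulus being $\frac{2r}{1+r}-\log(1+r)$, attained on the positive axis, which tends to $1-\log2$, exactly as in the preceding proofs.
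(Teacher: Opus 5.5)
Your proposal is correct and matches the paper's proof step for step: the same bound from the growth estimate \eqref{growth1} and the coefficient bound \eqref{coef1}, the same auxiliary function (the paper's $S$), the intermediate value theorem on $(0,1/2)$ plus monotonicity, and sharpness via $f(z)=\frac{2z}{1+z}-\log(1+z)$ at $z=-r$. Your justifications are actually tidier than the paper's: you check nonnegativity before squaring, you use the correct value $(2-\log 2)^2$ at $r=1/2$ where the paper writes $4$, and your derivative $2uu'+\cdots$ avoids the paper's slip of differentiating $u^2$ as $(u')^2$.
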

\begin{proof}
 By Lemma \ref{lm1} and Lemma \ref{lm2}, for $|z| \le r$,
we have
\begin{equation*}
|f(z)|^2+ \sum_{n=N}^{\infty} |a_n z^n|  \le
 \left( \frac{2r}{1-r} +\log(1-r) \right)^2 + \sum_{n=N}^{\infty}  \left( 2-\frac{1}{n} \right) r^{n}.
\end{equation*}
To find the radius $r$ satisfying the inequality
\begin{equation*}
|f(z)|^2+ \sum_{n=N}^{\infty} |a_n z^n|  \le d(f(0), \partial{f(\mathbb{D})})\quad (|z|\leq r),
\end{equation*}
we define the function $S:[0, 1)\rightarrow\mathbb{R}$ such that
\begin{equation*}
S(r)=\left(\frac{2r}{1-r} +\log(1-r) \right)^2 + \sum_{n=N}^{\infty} \left( 2-\frac{1}{n} \right) r^{n}- (1 - \log{2}).
\end{equation*}
We claim that $S(r)\leq 0$ for $r \le r_{2,N}$, where $r_{2,N}$ is the solution of the equation
\begin{equation}\label{eq8}
\left(\frac{2r}{1-r} +\log(1-r) \right)^2 + \sum_{n=N}^{\infty} \left( 2-\frac{1}{n} \right) r^{n}- (1 - \log{2})=0.
\end{equation}
An easy calculation shows that $S(0)=\log2-1<0$ and
\[S\left(\frac{1}{2}\right)=4+\sum_{n=N}^{\infty} \left( 2-\frac{1}{n} \right) \frac{1}{2^{n}}- (1 - \log{2})
\geq4-(1-\log2)
=1+\log2
>0\]
 Clearly, by the Intermediate Value Theorem, there exists a root $r\in(0, 1/2)$  of the equation $S(r)=0$. Note that, for $r\in(0, 1/2)$,
 \begin{align*}S'(r)&=\left(\frac{2}{(1-r)^2}-\frac{1}{1-r}\right)^2+\frac{2 r^{N-1} (N+r-N r)}{(-1+r)^2}-\frac{r^{N-1}}{1-r}\\
 &=\left(\frac{2}{(1-r)^2}-\frac{1}{1-r}\right)^2+\frac{2N r^{N-1}(1-r) }{(1-r)^2}+\frac{2r^N}{(1-r)^2}-\frac{r^{N-1}}{1-r}\\
 &=\left(\frac{2}{(1-r)^2}-\frac{1}{1-r}\right)^2+\frac{2N r^{N-1}(1-r) }{(1-r)^2}+\frac{(3-r)r^{N-1}}{(1-r)^2}\\
 &\geq0
 \end{align*}
It shows that the function $S(r)$ is strictly increasing in $(0, 1/2)$. Thus, $S(r)\leq 0$ for $r \le r_{2,N}$, where $r_{2,N}$ is the unique root of the equation \eqref{eq8}.

To prove the sharpness, we assume
\begin{equation*}
    f(z)=\frac{2z}{(1+z)}- \log(1+z).
\end{equation*}

By \eqref{eq8}, at $z=-r=-r_{2,N}$, we have,
\begin{align*}
|f(z)|^2 + \sum_{n=N}^{\infty} |a_n z^n| & =
\left| \frac{2z}{1+z} -\log(1+z) \right|^2 + \sum_{n=N}^{\infty} \left| (-1)^{n+1}\left( 2-\frac{1}{n} \right) z^{n} \right|\\
&= \left| \frac{-2r}{1-r} -\log(1-r) \right|^2 + \sum_{n=N}^{\infty} \left|\left( 2-\frac{1}{n} \right) (-r)^{n} \right|\\
&= \left( \frac{2r}{1-r} +\log(1-r) \right)^2 + \sum_{n=N}^{\infty} \left( 2-\frac{1}{n} \right) r^{n}\\
&=(1 - \log{2})\\
&=\liminf \limits_{r \rightarrow 1} \left(\frac{2r}{1+r}- \log(1+r)\right)\\
&=d(f(0), \partial{f(\mathbb{D})}).
\end{align*}
Hence, the result is sharp.
\end{proof}

\section{certain Bohr radii for the class $\mathcal{C}_2$}
\noindent
We need the following lemmas for main results.
\begin{lemma} {\cite{Sil1972}}
If $f(z)= z + \sum_{n=2}^{\infty} a_n z^n \in \mathcal{C}_2$, then
\begin{equation}\label{coef2}
 |a_n| \le 1
\end{equation}
\begin{equation}\label{growth2}
\frac{r}{1+r}  \le |f(z)| \le \frac{r}{1-r}
\end{equation}
\begin{equation}\label{distortion2}
\frac{1}{(1+r)^2} \le |f^{'}(z)| \le \frac{1}{(1-r)^2}
\end{equation}
for all $|z| \le r$. Equality in all cases is obtained for $f(z)=z/(1-z)$.
\end{lemma}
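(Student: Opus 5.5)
The lemma for $\mathcal{C}_2$ is cited from Silverman, so I will not reprove it from scratch. The plan is to derive its three estimates from the class $\mathcal{C}_1$ through the natural link between $\mathcal{C}_2$ and the defining condition. Put $F(z)=zf'(z)$. The condition $\mathrm{Re}\,\big((zf')'/g'\big)>0$ with $g$ convex says that $F$ is close-to-convex relative to the convex function $g$, that is, $F\in\mathcal{C}_1$ up to normalization. Concretely, the normalized version of $g$ has the form $z+\cdots$, and $zF'/(zg')=(zf')'/g'$, where $zg'$ is starlike exactly because $g$ is convex.

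\emph{Coefficients.} I would write $(zf')'=g'p$ with $p$ Carathéodory and $|p_k|\le 2$. Since $g$ is convex, $|b_k|\le |b_1|$. Comparing coefficients gives $n^2a_n=\sum_{k=1}^{n}kb_kp_{n-k}$. The cleaner route is to apply Lemma~\ref{lm1} to $F=zf'$: its $n$th coefficient is $na_n$, so $n|a_n|\le 2-1/n$ is immediate. That only yields $|a_n|\le(2n-1)/n^2$, which is \emph{better} than $1$ for $n\ge2$ but is not the stated bound. To get the stated bound I would instead argue directly: convexity of $g$ gives $|b_k|\le1$ after normalizing $b_1$ (handled by a rotation, since $\mathrm{Re}\,b_1>0$), and this leads to $n^2|a_n|\le\sum_{k=1}^{n}k\cdot 2 - n=n^2$. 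In that sum the $k=n$ term has $p_0=1$ rather than $2$, which is why $n$ is subtracted. Hence $|a_n|\le1$.

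\emph{Distortion and growth.} From $(zf')'=g'p$ I get $|(zf')'|\le |g'||p|\le \frac{1}{(1-r)^2}\cdot\frac{1+r}{1-r}$. Integrating this is too crude, so I would instead apply the distortion estimate \eqref{distortion1} of Lemma~\ref{lm2} to $F=zf'\in\mathcal{C}_1$. That gives $\frac{1-r}{(1+r)^2}\le |F'(z)|\le\frac{1+r}{(1-r)^2}$, and also bounds $|F(z)|=r|f'(z)|$ via \eqref{growth1}. These are not sharp for the claimed $1/(1-r)^2$ either. The genuinely sharp route, which I would follow, is the subordination structure: $f'$ satisfies $\mathrm{Re}\big(1+zf''/f'\big)$-type control relative to $g$. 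Then I would integrate the extremal case $g(z)=z/(1-z)$, $p(z)=(1-z)/(1+z)$ rotated, which produces $f'(z)=1/(1-z)^2$ and $f(z)=z/(1-z)$. The growth bounds \eqref{growth2} then follow by integrating \eqref{distortion2} along the radius for the upper bound. For the lower bound I would use the standard argument along the preimage of the segment to the nearest boundary point.

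\emph{Main obstacle.} The main difficulty is getting the sharp constants, in particular the distortion $\frac{1}{(1-r)^2}$. Naive products of the bounds for $g'$ and $p$ overshoot. I would first try to show that $f$ is itself convex-like, using $\mathcal{C}_2\subset$ the class of functions convex in one direction. Failing that, I would simply invoke Silverman's result \cite{Sil1972} as the paper does. Equality for $z/(1-z)$ is then verified by direct substitution: its coefficients are all $1$, and $|f|$, $|f'|$ attain the bounds at $z=\pm r$.
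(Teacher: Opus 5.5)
The paper gives no proof of this lemma. It simply quotes it from Silverman \cite{Sil1972}, so your fallback of citing it is exactly what the paper does. The derivation you sketch before that, however, contains a false step, and it leaves the distortion bounds unproved. You assert that $F=zf'$ belongs to $\mathcal{C}_1$. The identity $zF'/(zg')=(zf')'/g'$ only shows that $F$ is close-to-convex relative to the \emph{starlike} function $zg'$, as you note yourself. But $\mathcal{C}_1$ requires a \emph{convex} comparison function, so $F$ lies in Kaplan's class $\mathcal{C}$, not in $\mathcal{C}_1$. The extremal function shows this concretely: $f(z)=z/(1-z)\in\mathcal{C}_2$, yet $zf'(z)=z/(1-z)^2$ has second coefficient $2>3/2$, violating Lemma~\ref{lm1}. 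So the bound $|a_n|\le(2n-1)/n^2<1$ you extracted is not a harmless improvement. It contradicts the sharpness of \eqref{coef2} at $z/(1-z)$, and that should have flagged the mistake. The same holds for the estimates you import from Lemma~\ref{lm2}. They would give $r|f'(z)|\le \frac{2r}{1-r}+\log(1-r)=r+\frac{3}{2}r^2+\cdots$, which $z/(1-z)$ violates, since there $r|f'(r)|=r+2r^2+\cdots$. Your ``subordination structure'' paragraph only computes the extremal function. That shows sharpness but proves no inequality, so neither side of \eqref{distortion2} is actually established.

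The repair is to use the right class for $F$. Since $F=zf'=z+2a_2z^2+\cdots$ satisfies $\mathrm{Re}(F'/g')>0$ with $g$ convex, $F$ is a normalized close-to-convex, hence univalent, function. Reade's bound $|A_n|\le n$ then gives $n|a_n|\le n$. The Koebe growth theorem gives $r/(1+r)^2\le|zf'(z)|\le r/(1-r)^2$ on $|z|=r$, which is precisely \eqref{distortion2}.

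If you prefer to stay elementary, the product estimate you dismissed as too crude actually works. Put $p=(zf')'/g'$, so $p(0)=1/b_1$. Then $|b_1|/(1+r)^2\le|g'(z)|\le|b_1|/(1-r)^2$ and $|p(0)|\frac{1-r}{1+r}\le|p(z)|\le|p(0)|\frac{1+r}{1-r}$, hence $\frac{1-r}{(1+r)^3}\le|(zf')'|\le\frac{1+r}{(1-r)^3}$. Radial integration gives $|zf'(z)|\le\int_0^r\frac{1+t}{(1-t)^3}\,dt=\frac{r}{(1-r)^2}$, which is sharp, not crude. The preimage-of-segment argument, legitimate because $zf'$ is univalent, gives $|zf'(z)|\ge r/(1+r)^2$.

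Your direct coefficient computation $n^2|a_n|\le n^2$ is correct, with one fix. You cannot normalize $b_1$ by a rotation, because that changes the hypothesis. What you actually need is $|b_1p_j|\le 2\,\mathrm{Re}(b_1)/|b_1|\le 2$, which follows from $\mathrm{Re}\,p>0$ with $p(0)=1/b_1$.

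Finally, \eqref{growth2} follows from \eqref{distortion2} as you describe, using the univalence of $f$ for the lower bound. That lower bound holds on $|z|=r$, not for all $|z|\le r$ as literally stated; it fails at $z=0$.
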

\noindent
Using the above coefficient bound, growth and distortion results we obtain the following Bohr-type inequalities for the class $\mathcal{C}_2$.
\begin{theorem}\label{th1}
Suppose that $f(z)=z + \sum_{n=2}^{\infty} a_n z^n \in \mathcal{C}_2$, then
\begin{equation}\label{2.2.1}
|f(z)| + |f^{'}(z)||z| + \sum_{n=2}^{\infty} |a_n z^n| \le d( f(0), \partial{f(\mathbb{D})})
\end{equation}
for $|z| \le r_{21}\simeq 0.173417$, where $r_{21}$ is the solution of
\begin{equation*}
    1-6 r+r^2+2 r^3 =0.
\end{equation*}
The result is sharp.
\end{theorem}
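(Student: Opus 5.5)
Following the proof of the corresponding theorem for $\mathcal{C}_1$, we first bound $d(f(0),\partial f(\mathbb{D}))$ from below. Since $f(0)=0$, the lower growth estimate \eqref{growth2} gives
\begin{equation*}
d(f(0),\partial f(\mathbb{D}))=\liminf_{|z|\to1}|f(z)-f(0)|\ge \lim_{r\to1}\frac{r}{1+r}=\frac12 .
\end{equation*}
Next, for $|z|\le r$, we combine the upper growth bound \eqref{growth2}, the upper distortion bound \eqref{distortion2} and the coefficient bound \eqref{coef2}:
\begin{equation*}
|f(z)|+|f'(z)||z|+\sum_{n=2}^{\infty}|a_n||z|^n\le \frac{r}{1-r}+\frac{r}{(1-r)^2}+\frac{r^2}{1-r}.
\end{equation*}
Subtracting $1/2$ and clearing the denominator $2(1-r)^2$, the numerator is
\begin{equation*}
2r(1-r)+2r+2r^2(1-r)-(1-r)^2=-1+6r-r^2-2r^3.
\end{equation*}
Thus the difference equals $-(1-6r+r^2+2r^3)/\bigl(2(1-r)^2\bigr)$. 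This is exactly minus the polynomial in the statement, so the sum is at most $d(f(0),\partial f(\mathbb{D}))$ whenever $T(r):=1-6r+r^2+2r^3\ge0$.

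We then show that $T$ has a unique root in $(0,1/2)$ and that $T\ge0$ to the left of it. We have $T(0)=1>0$ and $T(1/2)=1-3+1/4+1/4=-3/2<0$. Also $T'(r)=-6+2r+6r^2<0$ on $[0,1/2]$, since $T'(1/2)=-6+1+3/2<0$ and $T'$ is increasing there. Hence $T$ is strictly decreasing on this interval and has a unique root $r_{21}\approx0.173417$, with $T(r)\ge0$ for $r\le r_{21}$. This proves \eqref{2.2.1} for $|z|\le r_{21}$.

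For sharpness we take the extremal function $f(z)=z/(1-z)=\sum_{n\ge1}z^n$, which lies in $\mathcal{C}_2$. Its image is the half-plane $\operatorname{Re}w>-1/2$, so $d(f(0),\partial f(\mathbb{D}))=1/2$ exactly. At $z=r=r_{21}$ every term is attained: $|f(r)|=r/(1-r)$, $|f'(r)|r=r/(1-r)^2$ and $\sum_{n\ge2}r^n=r^2/(1-r)$. The left side of \eqref{2.2.1} therefore equals $1/2$ at $r_{21}$ and exceeds $1/2$ for $r>r_{21}$. So the radius cannot be improved.

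The main point to verify is that one function realizes all the bounds at once. For $z/(1-z)$ this holds at the positive point $z=r$, and its boundary distance attains the lower growth bound $1/2$, so the argument closes without difficulty.
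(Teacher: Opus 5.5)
Your proof is correct and follows the paper's argument: you obtain $d(f(0),\partial f(\mathbb{D}))\ge 1/2$ from the lower growth bound and $r(2-r^2)/(1-r)^2$ from the growth, distortion and coefficient bounds, reduce the inequality to $1-6r+r^2+2r^3\ge 0$, and prove sharpness with $z/(1-z)$ at $z=r_{21}$. Two of your steps fill in details the paper leaves implicit: the check that $T'(r)<0$ on $[0,1/2]$, which gives a unique root and $T\ge 0$ to its left, and the observation that $z/(1-z)$ maps $\mathbb{D}$ onto $\mathrm{Re}\, w>-1/2$, which makes the distance exactly $1/2$.
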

\begin{proof}
We begin the proof by evaluating
\begin{equation*}
d(f(0), \partial{f(\mathbb{D})})= \liminf \limits_{|z| \rightarrow 1} |f(z)-f(0)|.
\end{equation*}
From growth estimate of $f(z) \in \mathcal{C}_2$ in equation ($\ref{growth2}$), we get
\begin{equation*}
d(f(0), \partial{f(\mathbb{D})}) \ge \lim \limits_{r \rightarrow 1} \left(\frac{r}{1+r}\right)
\end{equation*}
which gives
\begin{equation}\label{eq9}
d(f(0), \partial{f(\mathbb{D})}) \ge 1/2.
\end{equation}
\noindent
 Equations $(\ref{coef2})$, $(\ref{growth2})$ and $(\ref{distortion2})$ yield that for all $|z| \le r$, we have
\begin{equation}\label{eq10}
|f(z)| + |f^{'}(z)||z| + \sum_{n=2}^{\infty} |a_n| |z|^n
\le \frac{r}{1-r} + \frac{r}{(1-r)^2} + \sum_{n=2}^{\infty} r^{n}=\frac{r \left(2-r^2\right)}{(1-r)^2}.
\end{equation}
An easy calculation shows that
\begin{align}\label{eq11}
\frac{r \left(2-r^2\right)}{(1-r)^2}&\leq \frac{1}{2}\nonumber\\
\text{if and only if}\quad \frac{1-6 r+r^2+2 r^3}{2 (1-r)^2}&\geq0\nonumber\\
\text{if and only if} \quad1-6 r+r^2+2 r^3&\geq0.
\end{align}
By \eqref{eq10} and \eqref{eq11}, it follows that the inequality
\[|f(z)| + |f^{'}(z)||z| + \sum_{n=2}^{\infty} |a_n z^n| \le d( f(0), \partial{f(\mathbb{D})})\]
holds for all $|z|\leq r$, where $r=r_{21}\simeq0.173417\in(0, 1)$ is the root of the equation $1-6 r+r^2+2 r^3=0.$
\begin{figure}
  \centering
  \includegraphics[width=10cm]{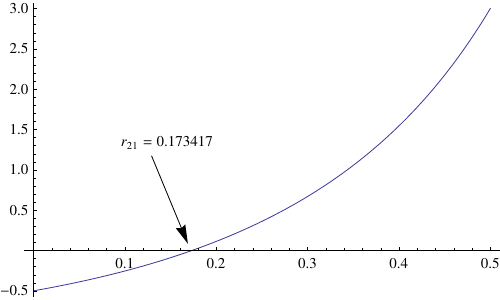}\\
  \caption{The radius $r_{21}=0.173417$ is sharp}
\end{figure}
\\
\noindent
To prove the sharpness, consider the function
\begin{equation*}
    f(z)=\frac{z}{(1-z)}=\sum_{n=1}^{\infty} z^n.
\end{equation*}
\\
\noindent
At $z=r_{21}$, we have
\begin{align*}
|f(z)| + |f^{'}(z)||z| + \sum_{n=2}^{\infty} |a_n| |z|^n &= \left|\frac{z}{(1-z)}\right|+ \left|\frac{z}{(1-z)^2}\right| +  \sum_{n=1}^{\infty}  |z|^n\\
&=\frac{r_{21}}{(1-r_{21})}+ \frac{r_{21}}{(1-r_{21})^2} +  \sum_{n=1}^{\infty}  r_{21}^n\\
&=\frac{r_{21} \left(2-r_{21}^2\right)}{(1-r_{21})^2}\\
&=\frac{1}{2}\\
&=d(f(0), \partial{f(\mathbb{D})}).
\end{align*}
Hence, the result is sharp.
\end{proof}
\noindent
Next, by considering powers of the coefficient $(|a_n|)$, we evaluate the corresponding sharp Bohr radii for the class $\mathcal{C}_2$.
\begin{theorem}
Suppose that $f(z)=z + \sum_{n=2}^{\infty} a_n z^n \in \mathcal{C}_2$, then
\begin{equation} \label{2.2.2}
|z|+ \sum_{n=2}^{\infty} |a_n z^n| + \sum_{n=2}^{\infty} |a_n|^{p} |z|^{np}  \le d(f(0), \partial{f(\mathbb{D})})
\end{equation}
for $|z| \le r^*_{p}$ and $p \ge 1$, where $r^*_{p}$ is the solution of
\begin{equation*}
1-3 r-r^p-2 r^{2 p}+3 r^{1+p}+2 r^{1+2 p}=0.
\end{equation*}
The result is sharp.
\end{theorem}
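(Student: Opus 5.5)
The plan is to follow the same pattern as Theorem \ref{th1}: bound $d(f(0),\partial f(\mathbb{D}))$ from below, bound the left-hand side of \eqref{2.2.2} from above using \eqref{coef2}, and compare the two. For the first part, \eqref{eq9} (which came from the growth estimate \eqref{growth2}) already gives $d(f(0),\partial f(\mathbb{D}))\ge 1/2$ for every $f\in\mathcal{C}_2$, so no new work is needed there.

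For the upper bound, \eqref{coef2} gives $|a_n|\le 1$, hence $|a_n|^p\le 1$ for $p\ge1$. So for $|z|\le r$,
\begin{equation*}
|z|+ \sum_{n=2}^{\infty} |a_n z^n| + \sum_{n=2}^{\infty} |a_n|^{p} |z|^{np}\le r+\sum_{n=2}^\infty r^n+\sum_{n=2}^\infty r^{np}=\frac{r}{1-r}+\frac{r^{2p}}{1-r^p}=:T(r).
\end{equation*}
I then need to solve $T(r)\le 1/2$. Multiplying through by $2(1-r)(1-r^p)>0$ turns this into
\begin{equation*}
2r(1-r^p)+2r^{2p}(1-r)\le (1-r)(1-r^p).
\end{equation*}
After expanding, this is exactly
\begin{equation*}
1-3r-r^p-2r^{2p}+3r^{1+p}+2r^{1+2p}\ge 0,
\end{equation*}
which is the polynomial in the statement.

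To get a unique threshold $r^*_p$, I will argue through $T$ itself rather than through the polynomial. $T$ is a sum of two increasing functions on $[0,1)$, so it is strictly increasing. We have $T(0)=0<1/2$, and $T(1/3)\ge \frac{1/3}{2/3}=1/2$, with strict inequality because the second term is positive. Hence $T(r)=1/2$ has a unique root $r^*_p\in(0,1/3)$, and $T(r)\le 1/2$ holds precisely for $r\le r^*_p$. This root coincides with the root of the displayed equation in $(0,1)$. The only delicate point is this monotonicity/uniqueness step, and it is easiest done with $T$ as above, avoiding differentiating the polynomial.

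For sharpness I take $f(z)=z/(1-z)=\sum_{n\ge1}z^n$, which attains equality in \eqref{coef2} and \eqref{growth2}. This map sends $\mathbb{D}$ onto the half-plane $\operatorname{Re} w>-1/2$, so $d(f(0),\partial f(\mathbb{D}))=1/2$ exactly. At $z=r^*_p$ all $a_n=1$, so the left-hand side of \eqref{2.2.2} equals $T(r^*_p)=1/2=d(f(0),\partial f(\mathbb{D}))$. For $r>r^*_p$ the inequality fails, which shows the radius cannot be improved.
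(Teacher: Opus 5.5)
Your proposal is correct and follows the paper's argument: you bound $d(f(0),\partial f(\mathbb{D}))\ge 1/2$ via \eqref{growth2}, estimate the left-hand side using $|a_n|\le 1$, reduce to $1-3r-r^p-2r^{2p}+3r^{1+p}+2r^{1+2p}\ge 0$, and show sharpness with $z/(1-z)$. The one difference is the uniqueness step, where you use the evident monotonicity of $T(r)=\frac{r}{1-r}+\frac{r^{2p}}{1-r^p}$ in place of the paper's derivative estimate for the polynomial (which needs $pr^p\le 1/(e\log 2)$ on $(0,1/2)$); your route is simpler, gives uniqueness on all of $(0,1)$, and locates the root more precisely, $r^*_p\in(0,1/3)$.
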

\begin{proof} Let $f\in\mathcal{C}_2.$
Now proceeding as  in the previous Theorem \ref{th1}, we get
\begin{equation*}
d(f(0), \partial{f(\mathbb{D})}) = \liminf \limits_{|z| \rightarrow 1} |f(z)-f(0)| \ge \lim \limits_{r \rightarrow 1} \left(\frac{r}{1+r}\right) = \frac{1}{2}
\end{equation*}
\\
\noindent
It follows from the equations $(\ref{coef2})$, $(\ref{growth2})$ and $(\ref{distortion2})$ that for all $|z| \le r$, $p \ge 1$,  the following inequality holds
\begin{equation*}
|z|+ \sum_{n=2}^{\infty} |a_n| |z|^n+ \sum_{n=2}^{\infty} |a_n|^{p} |z|^{np} \le r+ \sum_{n=2}^{\infty} r^{n}+ \sum_{n=2}^{\infty} r^{pn}=r+\frac{r^2}{1-r}+\frac{r^{2 p}}{1-r^p}.
\end{equation*}
Also, \[r+\frac{r^2}{1-r}+\frac{r^{2 p}}{1-r^p}=\frac{r+r^{2 p}-r^{1+p}-r^{1+2 p}}{(1-r) \left(1-r^p\right)}=\frac{1}{2}\]
if and only if $1-3 r-r^p-2 r^{2 p}+3 r^{1+p}+2 r^{1+2 p}=0$.
Let us assume that
\begin{equation*}
Q(r)=1-3 r-r^p-2 r^{2 p}+3 r^{1+p}+2 r^{1+2 p}.
\end{equation*}
Since, $Q(0)=1$ and
\noindent
\[Q(1/2)=2^{-1 - 2 p} (-2 + 2^p - 4^p)=-2^{-1 - 2 p} (2 - 2^p + 4^p)=-2^{-1 - 2 p} (2 + 2^p( 2^p-1)<0,\] by Intermediate Value Theorem, $Q(r)$ has a root in $(0, 1/2)$. To prove that the function $Q$ is decreasing  we need to find the maximum of the function
$j(p)=pr^p, (p\geq 1, r\in(0, 1/2))$. Clearly,
$j'(p)=r^p (1+p \log r).$ An easy calculation shows that  $j'(p)=0 $ if and only if $p= -{1}/{\log r}$. Also, $j''\left(-{1}/{\log r}\right)={\log r}/{e}<0$. Thus,
\begin{equation}\label{eq12}
pr^p\leq j(-{1}/{\log r})=-{1}/{(e \log r)}\leq -{1}/{(e \log (1/2))}={1}/{(e \log 2)}.\end{equation}
Using equation \eqref{eq12}, for $r\in(0, 1/2)$, we have
\begin{align*}
Q'(r)&=-3+3 r^p+2 r^{2 p}+p r^{p-1} \left(-1+3 r-4 r^p+4 r^{1+p}\right)\\
&<-3+\frac{3}{2}+\frac{1}{2}-p r^{p-1} +3pr^p -4p r^{2p-1}+4p r^{2p}\\
&=-1+pr^p\left(-\frac{1}{r}+3\right)+4pr^{2p}\left(-\frac{1}{r}+1\right)\\
&<-1+pr^p-4pr^{2p}\\
&<-1+\frac{1}{e \log 2}-4pr^{2p}\\
&<0.
\end{align*}
Thus, $Q(r)=0$ has the unique root $r\in (0, 1/2)$, say $r \le r^*_{p}$.
Therefore, equation $(\ref{2.2.2})$ holds for $r \le r^*_{p}$ and $p \ge 1$.
\\
\noindent
The result is sharp for the function
\begin{equation*}
    f(z)=\frac{z}{(1-z)}=\sum_{n=1}^{\infty} z^n.
\end{equation*}
\end{proof}
\begin{remark}
For different values of $p$ the radius converges to $0.333333$ upto $6$ decimal places as evident from the table given below
\end{remark}
\begin{table}[h!]
    \begin{center}
		\begin{tabular}{ |c|c|c|c|c|c|c|c|c|c| }
			\hline
			S. No. & 1 & 2&3&4&5&6&7\\
			\hline
			p & 2 & 3&4&5&6&7&8\\
			\hline
			$r^*_p$ & 0.327553 & 0.332707 & 0.333265 & 0.333326 & 0.333332 & 0.333333 &0.333333\\
			\hline
		\end{tabular}
        \caption{\textbf{Calculation of the  radius $r^*_p$ for various values of $p$.}}
		\label{Table:2}
    \end{center}
\end{table}

\noindent
We evaluate Bohr-Rogosinski radius for the class $\mathcal{C}_2$ in next two theorems.
\begin{theorem}
Suppose that $f(z)=z + \sum_{n=2}^{\infty} a_n z^n \in \mathcal{C}_2$ and  $N\geq2$ is an integer. Then
\begin{equation} \label{2.2.3}
|f(z)|+ \sum_{n=N}^{\infty} |a_n z^n|  \le d(f(0), \partial{f(\mathbb{D})})
\end{equation}
for $|z| \le r^*_{1,N}$, where $r^*_{1,N}$ is the solution of
\[ 3 r + 2 r^N-1=0.\]
The result is sharp.
\end{theorem}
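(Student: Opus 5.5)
We follow the template of Theorem \ref{th3}, now using the $\mathcal{C}_2$ estimates. As in the proof of Theorem \ref{th1}, the lower growth bound in \eqref{growth2} gives
\[
d(f(0),\partial f(\mathbb{D}))=\liminf_{|z|\to1}|f(z)-f(0)|\ge \lim_{r\to1}\frac{r}{1+r}=\frac12 .
\]
For $|z|\le r$, the upper growth bound in \eqref{growth2} together with the coefficient bound \eqref{coef2} gives
\[
|f(z)|+\sum_{n=N}^{\infty}|a_n||z|^n\le \frac{r}{1-r}+\sum_{n=N}^{\infty}r^n=\frac{r+r^N}{1-r}.
\]
It therefore suffices to have $\dfrac{r+r^N}{1-r}\le \dfrac12$. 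This is equivalent to $2r+2r^N\le 1-r$, that is, to $T(r):=3r+2r^N-1\le 0$.

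Next we locate the root. We have $T(0)=-1<0$ and $T(1/2)=\tfrac12+2^{1-N}>0$. Moreover, $T'(r)=3+2Nr^{N-1}>0$ on $(0,1)$, so $T$ is strictly increasing. Hence $T$ has a unique zero $r^*_{1,N}\in(0,1/2)$, and $T(r)\le 0$ exactly for $r\le r^*_{1,N}$. This proves \eqref{2.2.3} for $|z|\le r^*_{1,N}$.

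For sharpness, take $f(z)=z/(1-z)=\sum_{n\ge1}z^n$, which lies in $\mathcal{C}_2$ and attains equality in \eqref{growth2} and \eqref{coef2}. Since $f(\mathbb{D})$ is the half-plane $\operatorname{Re} w>-1/2$, we get $d(f(0),\partial f(\mathbb{D}))=1/2$ exactly. At $z=r=r^*_{1,N}$ the left-hand side of \eqref{2.2.3} equals $\dfrac{r+r^N}{1-r}=\dfrac12$. For $r>r^*_{1,N}$ it exceeds $1/2$, because $T$ is increasing. Hence the radius cannot be improved.

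The only step needing care is the identification of the distance for the extremal function. This should be computed directly from the half-plane image rather than just quoted from the limit of the lower bound; everything else is routine.
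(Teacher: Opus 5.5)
Your proposal is correct and follows essentially the paper's own argument. The lower growth bound gives $d(f(0),\partial f(\mathbb{D}))\ge 1/2$; the upper growth and coefficient bounds reduce the claim to $3r+2r^N-1\le 0$; monotonicity gives a unique root; and $f(z)=z/(1-z)$ gives sharpness. Your additions only make explicit what the paper leaves implicit: you locate the root in $(0,1/2)$, compute $d=1/2$ directly from the half-plane image, phrase the reduction as sufficiency rather than ``if and only if'', and note that the inequality fails for $r>r^*_{1,N}$.
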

\begin{proof}
Let $f\in  \mathcal{C}_2$. Using
  coefficients bound  ($\ref{coef2}$) and growth estimate ($\ref{growth2}$), for $|z| \le r$, we have
\begin{equation}\label{eq13}
|f(z)|+ \sum_{n=N}^{\infty} |a_n z^n|  \le
\frac{r}{1-r}  + \sum_{n=N}^{\infty} r^{n}.
\end{equation}
\noindent
By \eqref{eq9} and \eqref{eq13}, the  inequality  holds
\begin{equation*}
|f(z)|+ \sum_{n=N}^{\infty} |a_n z^n|  \le  d(f(0), \partial{f(\mathbb{D})})
\end{equation*}
if and only if
\begin{align*}
  \frac{r}{1-r}  + \sum_{n=N}^{\infty} r^{n}&=\frac{r+r^N}{1-r} \le   \frac{1}{2}
\end{align*}
\text{or equivalently} $ 3 r + 2 r^N-1=0.$
Let us defined the function $R:[0, 1]\rightarrow 1$  such that
\begin{equation*}
R(r)= 3 r + 2 r^N-1.
\end{equation*}
Since $R(0)=-1$, $R(1)=4$ and $R'(r)=3+2 N r^{-1+N}>0$, using the  Intermediate Value Theorem, the equation $R(r)=0$ has the unique root $r\in(0, 1)$, say  $r^*_{1,N}$.
\\
\noindent
To prove the sharpness, we assume
\begin{equation*}
    f(z)=\frac{z}{(1-z)}=\sum_{n=1}^{\infty} z^n.
\end{equation*}
At $z=r=r^*_{1,N}$, we have
\begin{align*}
|f(z)|+ \sum_{n=N}^{\infty} |a_n z^n|&= \left| \frac{z}{1-z} \right|+ \sum_{n=N}^{\infty} |z|^n\\
&=\frac{r}{1-r}+ \sum_{n=N}^{\infty} r^n\\
&=\frac{r+r^N}{1-r}\\
&=\frac{1}{2}\\
&=d(f(0), \partial{f(\mathbb{D})}).
\end{align*}
Hence, the result is sharp.
\end{proof}
\begin{theorem}
Suppose that $f(z)=z + \sum_{n=2}^{\infty} a_n z^n \in \mathcal{C}_2$, then
\begin{equation} \label{2.2.4}
|f(z)|^2+ \sum_{n=N}^{\infty} |a_n z^n|  \le d(f(0), \partial{f(\mathbb{D})})
\end{equation}
for $|z| \le r^*_{2,N}$, where $r^*_{2,N}$ is the solution of
\begin{equation}\label{eq21}
1-2 r-r^2-2 r^N+2 r^{1+N}=0.
\end{equation}
The result is sharp.
\end{theorem}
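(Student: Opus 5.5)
The argument follows the same pattern as the preceding theorems for $\mathcal{C}_2$. First, as in \eqref{eq9}, the lower growth bound in \eqref{growth2} gives
\[
d(f(0),\partial f(\mathbb{D}))=\liminf_{|z|\to1}|f(z)-f(0)|\ge \lim_{r\to1}\frac{r}{1+r}=\frac12 .
\]
Next, for $|z|\le r$, the upper growth bound in \eqref{growth2} and the coefficient bound \eqref{coef2} give
\[
|f(z)|^2+\sum_{n=N}^{\infty}|a_n z^n|\le \frac{r^2}{(1-r)^2}+\sum_{n=N}^\infty r^n=\frac{r^2}{(1-r)^2}+\frac{r^N}{1-r}.
\]
So it suffices to show that this majorant is at most $1/2$ for $r\le r^*_{2,N}$. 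Multiplying by $2(1-r)^2>0$, the condition becomes
\[
2r^2+2r^N(1-r)\le (1-r)^2,
\]
that is,
\[
T(r):=1-2r-r^2-2r^N+2r^{N+1}\ge 0 .
\]
This is exactly the equation \eqref{eq21} set to zero.

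The remaining step, and the only one needing care, is to show that $T$ has a unique root in $(0,1)$ and is positive to its left. We have $T(0)=1>0$. Also $T(1/2)=1-1-\tfrac14-2^{1-N}+2^{-N}=-\tfrac14-2^{-N}<0$, so by the Intermediate Value Theorem there is a root in $(0,1/2)$. For monotonicity, compute
\[
T'(r)=-2-2r-2Nr^{N-1}+2(N+1)r^N=-2-2r-2r^{N-1}\bigl(N-(N+1)r\bigr).
\]
For $r\le N/(N+1)$, every term is nonpositive, so $T'<0$. This covers $(0,1/2]$ since $N\ge2$. If uniqueness on all of $(0,1)$ is wanted, the crude bound $2(N+1)r^N-2Nr^{N-1}=2r^{N-1}((N+1)r-N)\le 2r^{N-1}\le 2$ still gives $T'(r)\le -2r<0$. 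Hence $T$ is strictly decreasing and has a unique root $r^*_{2,N}$, with $T\ge0$ on $[0,r^*_{2,N}]$. This proves the inequality \eqref{2.2.4} for $|z|\le r^*_{2,N}$.

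For sharpness, take $f(z)=z/(1-z)=\sum_{n\ge1}z^n$, for which $d(f(0),\partial f(\mathbb{D}))=1/2$, since $f(\mathbb{D})$ is the half-plane $\operatorname{Re}w>-1/2$. At $z=r^*_{2,N}$ we have equality in both estimates, so the left side equals $\frac{r^2}{(1-r)^2}+\frac{r^N}{1-r}=\frac12$ by \eqref{eq21}. Therefore the radius cannot be enlarged.
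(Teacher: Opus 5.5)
Your proposal is correct and follows essentially the same route as the paper: the lower bound $d(f(0),\partial f(\mathbb{D}))\ge 1/2$ from \eqref{growth2}, the majorant $\frac{r^2}{(1-r)^2}+\frac{r^N}{1-r}$ from \eqref{growth2} and \eqref{coef2}, reduction to the sign of $1-2r-r^2-2r^N+2r^{N+1}$ (the paper uses its negative $S(r)$), the Intermediate Value Theorem on $(0,1/2)$ with strict monotonicity, and sharpness via $z/(1-z)$. Your derivative estimate differs only cosmetically from the paper's bound $S'(r)>2(r+r^2-r^N)/r>0$. Your phrasing ``it suffices'' is more accurate than the paper's ``if and only if''.
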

\begin{proof}For $|z| \le r$,
the coefficient bounds  ($\ref{coef2}$) and growth estimate  ($\ref{growth2}$) yield
\begin{align*}
|f(z)|^2+ \sum_{n=N}^{\infty} |a_n z^n|&  \le \left( \frac{r}{1-r} \right)^2 + \sum_{n=N}^{\infty} r^{n}\nonumber\\
&=\frac{r^2+r^N-r^{1+N}}{(1-r)^2}.
\end{align*}
Also, by \eqref{eq9}
\[
d(f(0), \partial{f(\mathbb{D})}) \ge 1/2.
\]
Thus, ($\ref{2.2.4}$) holds if and only if
\begin{equation*}
 \frac{r^2+r^N-r^{1+N}}{(1-r)^2}   \le   \frac{1}{2}
\end{equation*}
equivalently
\[-1+2 r+r^2+2 r^N-2 r^{1+N}\leq0.\]
Let us define a function
\begin{equation*}
S(r)=-1+2 r+r^2+2 r^N-2 r^{1+N}\quad (r\in[0,1 )).
\end{equation*}
It is easy to see that
$S(0)=-1<0$,  $S(1/2)=1/4+1/2^N>0$ and
\begin{align*}
S'(r)&= \frac{2 \left(r+r^2+N r^N-r^{1+N}-N r^{1+N}\right)}{r}\\
&=\frac{2 \left(r+r^2+ r^N(N-(1+N) r)\right)}{r}\\
&>\frac{2 \left(r+r^2- r^N\right)}{r}\\
&=\frac{2 \left(r(1-r^{N-1})+r^2\right)}{r}\\
&>0.
\end{align*}
In view of the above inequalities and the Intermediate Value Theorem, there exists the unique  root $r^*_{2,N}\in(0, 1/2)$ of the equation \eqref{eq21} which also satisfy the inequality $(\ref{2.2.4})$ for $r \le r^*_{2,N}$.
\\
\noindent
The result is sharp for the function
\begin{equation*}
f(z)=\frac{z}{(1-z)}=\sum_{n=1}^{\infty} z^n.
\end{equation*}
\end{proof}

\section{certain Bohr radii for the class $\mathcal{C}_3$}
\noindent
We have sharp coefficient bounds and distortion theorems for the class $\mathcal{C}_3$.
\begin{lemma} {\cite{Sil1972}}\label{lm3}
If $f(z)= z + \sum_{n=2}^{\infty} a_n z^n \in \mathcal{C}_3$, then
\begin{equation}\label{coef3}
 |a_n| \le \frac{2}{3} + \frac{1}{3n^2}  .
\end{equation}
This result is sharp, with equality for
\begin{equation*}
    f(z)= \frac{2z}{3(1-z)} - \frac{1}{3} \int_{0}^{z} \frac{\log(1-\zeta)}{\zeta} d\zeta.
\end{equation*}
\end{lemma}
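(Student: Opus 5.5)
The plan is a direct coefficient comparison. Write the defining relation of $\mathcal{C}_3$ as an identity between power series, and bound each coefficient using the classical bounds for convex functions and for functions with positive real part.

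\textbf{Which form of the condition to use.} As literally displayed, $z((zf')')'$ vanishes at the origin while $(zg')'$ does not, so the quotient would vanish at $0$ and could not have positive real part. I will therefore use the Silverman--Telage form of the condition:
\[
\frac{(z(zf')')'}{(zg')'}=p(z),\qquad \operatorname{Re}p(z)>0,
\]
with $g$ convex. Comparing the constant terms gives $p(0)=a_1/b_1=1/b_1$. I normalize $g(z)=z+\sum_{k\ge2}b_kz^k$ (so $b_1=1$ and $p(0)=1$), writing $p(z)=1+\sum_{j\ge1}c_jz^j$. If instead one keeps a general $b_1$ with $\operatorname{Re} b_1>0$, the same computation goes through after rescaling $g$. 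That rescaling step is the only point needing care, and I expect it to be the main obstacle: with a general $b_1$ the Carathéodory bound has to be applied to $b_1 p$ rather than to $p$.

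\textbf{Key steps.}
\begin{enumerate}
\item Compute the two series. Since $z(zf')'=\sum_{n\ge1}n^2a_nz^n$, we get $(z(zf')')'=\sum_{n\ge1}n^3a_nz^{n-1}$. Likewise $(zg')'=\sum_{k\ge1}k^2b_kz^{k-1}$.
\item Equate the coefficients of $z^{n-1}$ in $(z(zf')')'=(zg')'\,p$:
\[
n^3a_n=\sum_{k=1}^{n}k^2b_k\,c_{n-k},\qquad c_0=1 .
\]
\item Apply the classical bounds: $|b_k|\le1$ for normalized convex $g$, and $|c_j|\le2$ (Carathéodory).
\item Combine them:
\[
n^3|a_n|\le n^2+2\sum_{k=1}^{n-1}k^2=n^2+\frac{(n-1)n(2n-1)}{3}.
\]
\item Divide by $n^3$:
\[
|a_n|\le\frac1n+\frac{(n-1)(2n-1)}{3n^2}=\frac{2n^2+1}{3n^2}=\frac23+\frac1{3n^2}.
\]
\end{enumerate}

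\textbf{Sharpness.} Equality requires $b_k=1$ and $c_j=2$ for all indices, that is, $g(z)=z/(1-z)$ and $p(z)=(1+z)/(1-z)$. Then $(zg')'=(1+z)/(1-z)^3$, so
\[
(z(zf')')'=\frac{(1+z)^2}{(1-z)^4}.
\]
Integrating three times gives $a_n=\frac23+\frac1{3n^2}$. On the other hand, the stated extremal function has the expansion
\[
\frac{2z}{3(1-z)}-\frac13\int_0^z\frac{\log(1-\zeta)}{\zeta}\,d\zeta=\sum_{n\ge1}\Bigl(\frac23+\frac1{3n^2}\Bigr)z^n,
\]
because $-\int_0^z\log(1-\zeta)\,\zeta^{-1}d\zeta=\sum z^n/n^2$. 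The two coincide, so this function attains equality for every $n$.
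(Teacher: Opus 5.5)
The paper gives no proof of this lemma; it is quoted from Silverman \cite{Sil1972}. Your proposal therefore has to stand on its own, and it does: the coefficient comparison is a correct, self-contained proof.

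\textbf{The definition.} Your reading is the right one. The displayed quotient $z((zf')')'/(zg')'$ vanishes at $z=0$, so the class as printed is empty. The quotient $(z(zf')')'/(zg')'$ reproduces exactly the stated bound and the stated extremal function. The same scheme, $na_n=\sum_{k=1}^n b_kc_{n-k}$ and $n^2a_n=\sum_{k=1}^n kb_kc_{n-k}$, also gives the bounds $2-1/n$ and $1$ quoted for $\mathcal{C}_1$ and $\mathcal{C}_2$, which corroborates this reading.

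\textbf{Computation and sharpness.} The identity $n^2+\frac{(n-1)n(2n-1)}{3}=\frac{n(2n^2+1)}{3}$ is right, and so is the expansion of the extremal function. For sharpness you only need that this particular $f$ lies in $\mathcal{C}_3$ and attains equality, which you show. The sentence ``equality requires $b_k=1$ and $c_j=2$'' is unnecessary and not literally true: the rotations $g(z)=z/(1-yz)$, $p(z)=(1+yz)/(1-yz)$ with $|y|=1$ also give equality.

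\textbf{General $b_1$.} The one step you leave open is a general $b_1$ with $\operatorname{Re}b_1>0$. This case is needed, because the paper defines $\mathcal{C}_3$ through \eqref{eq24}. Your suggested remedy does not work as stated. After replacing $g$ by $g/b_1$, the function $b_1p$ equals $1$ at the origin, but it takes values in the rotated half-plane $\{b_1w:\operatorname{Re}w>0\}$. So the normalized Carath\'eodory lemma cannot be applied to it.

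The clean fix is to keep $g$ unnormalized and work with $p$ directly.
\begin{itemize}
\item Write $p(z)=\sum_{j\ge0}c_jz^j$ with $c_0=1/b_1$.
\item Use Carath\'eodory's inequality in the form $|c_j|\le 2\operatorname{Re}p(0)=2\operatorname{Re}(1/b_1)$ for $j\ge1$.
\item Use $|b_k|\le|b_1|$, valid because $g/b_1$ is normalized convex.
\end{itemize}
In $n^3a_n=\sum_{k=1}^n k^2b_kc_{n-k}$, the term $k=n$ is $n^2b_n/b_1$, of modulus at most $n^2$. For $k<n$,
\[
|k^2b_kc_{n-k}|\le 2k^2|b_1|\operatorname{Re}(1/b_1)=2k^2\frac{\operatorname{Re}b_1}{|b_1|}\le 2k^2 .
\]
Hence the same bound $\frac23+\frac1{3n^2}$ follows, and in fact the sum is improved by the factor $\cos(\arg b_1)$.
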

\begin{lemma} {\cite{Sil1972}}\label{lm4}
If $f \in \mathcal{C}_3$ then
\begin{equation}
\frac{2r}{3(1+r)}+\frac{1}{3} \int_{0}^{r} \frac{\log(1+t)}{t} dt
\le |f(z)|
\le \frac{2r}{3(1-r)}-\frac{1}{3} \int_{0}^{r} \frac{\log(1-t)}{t} dt
\label{growth3}
\end{equation}
and
\begin{equation}
\frac{2}{3(1+r)^2}+ \frac{\log(1+r)}{3r}
\le |f^{'}(z)| \le
\frac{2}{3(1-r)^2}- \frac{\log(1-r)}{3r}
\label{distortion3}
\end{equation}
for all $(0 < |z| \le r)$ and equality holds in all cases for the extremal function
\begin{equation*}
    f(z)= \frac{2z}{3(1-z)} - \frac{1}{3} \int_{0}^{z} \frac{\log(1-\zeta)}{\zeta} d\zeta.
\end{equation*}
\end{lemma}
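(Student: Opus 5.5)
The plan is to reduce the class $\mathcal{C}_3$ to a Carathéodory function times a known convex-function quantity, and then integrate along rays: three integrations give $|f'|$ and $|f|$. Normalize $b_1=1$, which is forced by the normalization of $f$ after a rotation/scaling of $g$. The definition then gives
\begin{equation*}
z\big((zf')'\big)' = (zg')'\,p(z), \qquad p(0)=1,\ \mathrm{Re}\,p>0 .
\end{equation*}
For the convex function $g$ we use the classical sharp estimates $|g'(z)|\le (1-r)^{-2}$ and $|(zg')'(z)|\le (1+r)(1-r)^{-3}$; the second holds because $zg'$ is starlike, so $(zg')'=g'\,q$ with $\mathrm{Re}\,q>0$. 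For $p$ we use $|p(z)|\le (1+r)/(1-r)$. All of these are extremal simultaneously for $g=z/(1-z)$ and $p=(1+z)/(1-z)$. With these choices the right-hand side becomes $(1+z)^2/(1-z)^4$, and integrating reproduces the extremal function
\begin{equation*}
f(z)= \frac{2z}{3(1-z)} - \frac{1}{3} \int_{0}^{z} \frac{\log(1-\zeta)}{\zeta}\, d\zeta.
\end{equation*}
I will check this first by a direct coefficient comparison: the coefficients $\frac{2}{3}+\frac{1}{3n^2}$ from Lemma \ref{lm3} should satisfy $n^2\cdot n\,a_n$ equal to the coefficients of $z(1+z)^2/(1-z)^4$.

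\textbf{Upper bounds.} Write $H=(zf')'$, so that $H(0)=1$ and $H'(z)=(zg')'p/z$. Then
\begin{equation*}
|H(z)|\le 1+\int_0^r \frac{(1+t)^2}{t(1-t)^4}\,dt-\text{(regular part)}.
\end{equation*}
More carefully, I integrate the majorant series coefficientwise: every coefficient of $H'$ is bounded by the corresponding coefficient of the extremal function's $H'$. This follows from the subordination/majorization of each factor. Next, $zf'(z)=\int_0^z H$ and $f=\int_0^z f'$. Integrating the majorant along the segment $[0,z]$ gives $|f'(z)|\le F'(r)$ and $|f(z)|\le F(r)$, where $F$ is the extremal function above. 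Since
\begin{equation*}
F'(r)=\frac{2}{3(1-r)^2}-\frac{\log(1-r)}{3r}
\end{equation*}
and $F(r)$ is the stated right-hand side of \eqref{growth3}, both upper bounds in \eqref{growth3} and \eqref{distortion3} follow.

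\textbf{Lower bounds.} These are the main obstacle, since a majorant argument gives nothing from below. For the distortion lower bound, I would first try to show $\mathrm{Re}\,H(z)\ge \frac{2}{3(1+r)^2}+\dots$ by the same three-step integration applied to real parts along the ray. On the ray, the minimum of $\mathrm{Re}\big(e^{-i\arg}\cdots\big)$ of each factor is attained at $z=-r$ for the extremal pair: convex $g$ gives $|g'|\ge(1+r)^{-2}$, and $\mathrm{Re}\,p\ge(1-r)/(1+r)$. I would then pass to $|f'|\ge$ the averaged lower bound $\frac{2}{3(1+r)^2}+\frac{\log(1+r)}{3r}$. If the real-part route fails because the product of the factors loses phase control, I would instead use the known structural fact (Silverman–Telage) that membership in $\mathcal{C}_3$ is equivalent to $f$ being an averaging integral $f'(z)=\int_0^1 k'(tz)\,d\mu$ of $\mathcal{C}_2$-type derivatives. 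Lemma \eqref{distortion2}-type bounds $|k'|\ge(1+r)^{-2}$ then integrate to the stated lower bound.

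For the growth lower bound I use the standard univalence argument. Let $w_0$ be a point of $f(\{|z|=r\})$ of minimal modulus, and let $\Gamma$ be the preimage of the segment $[0,w_0]$. Then
\begin{equation*}
|f(z)|\ge\int_\Gamma|f'(\zeta)||d\zeta|\ge\int_0^r\Big(\frac{2}{3(1+t)^2}+\frac{\log(1+t)}{3t}\Big)dt,
\end{equation*}
which equals the left side of \eqref{growth3}. Sharpness in all four estimates follows by evaluating $F$ at $z=\pm r$.
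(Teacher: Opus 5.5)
The paper does not prove this lemma; it is quoted from \cite{Sil1972}, so your argument has to stand on its own. Start by repairing the set-up. The relation $z\big((zf')'\big)'=(zg')'p$ with $p(0)=1$ is impossible, since the left side vanishes at $z=0$; this comes from a misprint in the paper's definition of $\mathcal{C}_3$. It is why your integral $\int_0^r (1+t)^2t^{-1}(1-t)^{-4}\,dt$ diverges and you have to subtract an unspecified ``regular part''. Your own coefficient check ($n^3a_n$ equals the $z^n$-coefficient of $z(1+z)^2(1-z)^{-4}$) matches the correct relation $\big(z(zf')'\big)'=(zg')'p=:\Phi$. That relation gives
\[
f'(z)=\int_0^1(-\log t)\,\Phi(tz)\,dt .
\]
With this correction your upper bounds are fine. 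In fact they follow at once from $|a_n|\le \frac23+\frac{1}{3n^2}$, via $|f(z)|\le\sum|a_n|r^n$ and $|f'(z)|\le\sum n|a_n|r^{n-1}$. A minor point: dividing $g$ by $b_1$ does not give $\mathrm{Re}\,p>0$, only $p(0)=1$ and $\mathrm{Re}(\bar b_1 p)>0$. The modulus and coefficient bounds for $p$ survive this, but $\mathrm{Re}\,p\ge\frac{1-r}{1+r}$ does not, and your real-part route relies on exactly that inequality.

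The genuine gap is the distortion lower bound. You know $|\Phi(tz)|\ge\frac{(1-tr)^2}{(1+tr)^4}$, and the target $\frac{2}{3(1+r)^2}+\frac{\log(1+r)}{3r}$ equals $\int_0^1(-\log t)\frac{(1-tr)^2}{(1+tr)^4}\,dt$. But a pointwise lower bound on $|\Phi|$ tells you nothing about $\bigl|\int_0^1(-\log t)\Phi(tz)\,dt\bigr|$ unless $\arg\Phi(tz)$ is controlled along the ray.

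Your real-part route gives that control only when $(zg')'(tz)$ has constant argument in $t$. An example is $g=z/(1+z)$ at $z=r$, where $\mathrm{Re}\,p(tr)\ge\frac{1-tr}{1+tr}$ finishes the job. For a general convex $g$ the argument of $(zg')'(tz)$ varies with $t$, and the product loses phase control exactly as you feared. There are also two mismatches: you quote $|g'|\ge(1+r)^{-2}$, whereas the factor actually present is $(zg')'$; and you aim the bound at $H=(zf')'$, although $\frac{2}{3(1+r)^2}+\cdots$ is the target for $f'$.

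The fallback is not an available characterization of $\mathcal{C}_3$. Even granting $f'(z)=\int k'(tz)\,d\mu(t)$ with $|k'(w)|\ge(1+|w|)^{-2}$, the step $\bigl|\int k'(tz)\,d\mu\bigr|\ge\int(1+tr)^{-2}\,d\mu$ is the same illegitimate interchange of modulus and integral. The numbers come out right only for $\mu=\frac23\delta_1+\frac13(-\log t)\,dt$, which simply re-encodes $F'(-r)$.

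The univalence device you use for $|f|$ also cannot be applied at the innermost level. The function $z(zf')'=\int_0^z\Phi$ need not be univalent: for the extremal function it is $z+3z^2+\cdots$, which violates $|a_2|\le 2$.

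What your tools actually prove is $|f'(z)|\ge(1+r)^{-2}$. This comes from $\mathcal{C}_3\subset\mathcal{C}_2$: $zf'$ is close-to-convex, hence $|zf'(z)|\ge r(1+r)^{-2}$. That bound is strictly weaker than the claim. Consequently the growth lower bound is also unproved, even though your derivation of it from the distortion lower bound via univalence of $f$ is correct.
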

\noindent
Using the above coefficient bound, growth and distortion results we obtain the following Bohr-type inequalities for the class $\mathcal{C}_3$.
\begin{theorem}\label{th2}
Suppose that $f(z)=z + \sum_{n=2}^{\infty} a_n z^n \in \mathcal{C}_3$, then
\begin{equation}\label{2.3.1}
|f(z)| + |f^{'}(z)||z| + \sum_{n=2}^{\infty} |a_n z^n| \le d( f(0), \partial{f(\mathbb{D})})
\end{equation}
for $|z| \le r_{31}$, where $r_{31}$ is the solution of
\begin{equation*}
\frac{2 (2-r^2) r}{3 (1-r)^2}-\frac{1}{3} \int_{0}^{r} \frac{\log(1-t)}{t} dt-\frac{1}{3} \log(1-r)+ \sum_{n=2}^{\infty} \frac{r^n}{3 n^2}-\left(\frac{1}{3}+\frac{\pi ^2}{36}\right)=0.
\end{equation*}
The radius is sharp.
\end{theorem}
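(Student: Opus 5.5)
The argument follows the same pattern as the corresponding results for $\mathcal{C}_1$ and $\mathcal{C}_2$. First I bound $d(f(0),\partial f(\mathbb{D}))$ from below using the lower growth estimate in Lemma \ref{lm4}. Letting $r\to1$ gives
\[
d(f(0),\partial f(\mathbb{D}))=\liminf_{|z|\to1}|f(z)|\ge \frac{1}{3}+\frac13\int_0^1\frac{\log(1+t)}{t}\,dt=\frac13+\frac{\pi^2}{36},
\]
since $\int_0^1 t^{-1}\log(1+t)\,dt=\sum_{n\ge1}(-1)^{n+1}/n^2=\pi^2/12$.

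Next, for $|z|\le r$, I bound the left-hand side of \eqref{2.3.1} term by term. I use the upper growth bound in \eqref{growth3} for $|f(z)|$, and $r$ times the upper distortion bound in \eqref{distortion3} for $|f'(z)||z|$. For the tail $\sum_{n\ge2}|a_n|r^n$, the coefficient bound \eqref{coef3} gives the estimate $\frac{2r^2}{3(1-r)}+\sum_{n\ge2}\frac{r^n}{3n^2}$. Adding these three pieces, the rational parts combine to
\[
\frac{2r}{3(1-r)}+\frac{2r}{3(1-r)^2}+\frac{2r^2}{3(1-r)}=\frac{2r(2-r^2)}{3(1-r)^2}.
\]
The remaining pieces are $-\frac13\int_0^r\frac{\log(1-t)}{t}dt$, $-\frac13\log(1-r)$ and $\sum_{n\ge2}\frac{r^n}{3n^2}$. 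So if $T(r)$ denotes the left side of the equation in the statement, the inequality holds whenever $T(r)\le 0$.

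I then show that $T$ has a unique root in $(0,1)$ with $T<0$ before it. I have $T(0)=-(\frac13+\frac{\pi^2}{36})<0$. Every term of $T$ apart from the constant is a power series in $r$ with nonnegative coefficients (note $-\log(1-r)$ and $-\int_0^r t^{-1}\log(1-t)\,dt=\sum r^n/n^2$), so $T$ is strictly increasing. Also $T(r)\to\infty$ as $r\to1^-$, or concretely $T(1/2)>0$ already, since the rational term alone equals $7/6>\frac13+\frac{\pi^2}{36}$. By the intermediate value theorem there is a unique root $r_{31}$, and $T\le0$ on $[0,r_{31}]$. This monotonicity step replaces the messy derivative computations in the earlier proofs; it is immediate once everything is written as a series.

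For sharpness, I take the extremal function of Lemma \ref{lm4}, but rotated so that the lower growth bound is attained on the negative axis. Concretely, I use $f(z)=-\overline{\,}\,F(-z)$ style: $f(z)=\frac{2z}{3(1+z)}+\frac13\int_0^z\frac{\log(1+\zeta)}{\zeta}d\zeta$, which has $|a_n|=\frac23+\frac1{3n^2}$ with alternating signs. Along the positive axis this $f$ is real and increasing to $\frac13+\frac{\pi^2}{36}$, which shows the distance equals this value. This is the main obstacle: I have to justify that this value is indeed $d(f(0),\partial f(\mathbb{D}))$, i.e. that the liminf over the whole circle equals the lower growth bound. This follows because the lower growth bound of Lemma \ref{lm4} holds for every member of $\mathcal{C}_3$, including $f$, while equality is attained along the ray. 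At $z=-r_{31}$, each term of the inequality then attains its upper bound: $|f(-r)|$ equals the growth maximum, $|f'(-r)|r$ equals $r$ times the distortion maximum, and the coefficient sum is exact. Hence equality holds in \eqref{2.3.1}, and the radius cannot be enlarged.
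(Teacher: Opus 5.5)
Your proposal is correct and follows the paper's proof in every essential step. You use the same lower bound $\frac13+\frac{\pi^2}{36}$ for the distance from \eqref{growth3}, and the same termwise majorant from \eqref{coef3}, \eqref{growth3} and \eqref{distortion3}. Uniqueness of the root comes from monotonicity, where your nonnegative-coefficient observation is a tidier substitute for the paper's explicit computation of $P'(r)$. Sharpness comes from the extremal function of Lemma \ref{lm4}: your rotated version evaluated at $z=-r_{31}$ is equivalent to the paper's unrotated function evaluated at $z=r_{31}$. One harmless slip: at $r=1/2$ the rational term equals $\frac23+\frac43+\frac13=\frac73$, not $\frac76$, which only strengthens $T(1/2)>0$.
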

\begin{proof}
We begin the proof by evaluating
\begin{equation*}
d(f(0), \partial{f(\mathbb{D})})= \liminf \limits_{|z| \rightarrow 1} |f(z)-f(0)|.
\end{equation*}
From equation ($\ref{growth3}$), we get
\begin{equation*}
d(f(0), \partial{f(\mathbb{D})}) \ge \lim \limits_{r \rightarrow 1} \left(\frac{2r}{3(1+r)}+\frac{1}{3} \int_{0}^{r} \frac{\log(1+t)}{t} dt \right)
\end{equation*}
which gives
\begin{equation}\label{eq16}
d(f(0), \partial{f(\mathbb{D})}) \ge \frac{1}{3} +\frac{1}{3} \int_{0}^{1} \frac{\log(1+t)}{t} dt=\frac{1}{3}+\frac{\pi ^2}{36}.
\end{equation}
From  equations $(\ref{coef3})$, $(\ref{growth3})$ and $(\ref{distortion3})$, for all $|z| \le r$, we have
\begin{align}\label{eq17}
|f(z)|& + |f^{'}(z)||z| + \sum_{n=2}^{\infty} |a_n| |z|^n\\
&\le \frac{2r}{3(1-r)}-\frac{1}{3} \int_{0}^{r} \frac{\log(1-t)}{t} dt+ \frac{2r}{3(1-r)^2}-\frac{1}{3} \log(1-r)+ \sum_{n=2}^{\infty} \left( \frac{2}{3} + \frac{1}{3 n^2} \right)  r^n\nonumber\\
&=\frac{2 (2-r^2) r}{3 (1-r)^2}-\frac{1}{3} \int_{0}^{r} \frac{\log(1-t)}{t} dt-\frac{1}{3} \log(1-r)+ \sum_{n=2}^{\infty} \frac{r^n}{3 n^2}.
\end{align}
In order to find the radius $r_{31}\in (0, 1)$ satisfying the inequality $(\ref{2.3.1})$, we define the function $P:[0, 1)\rightarrow\mathbb{R}$ by

\begin{equation*}
P(r)=\frac{2 (2-r^2) r}{3 (1-r)^2}-\frac{1}{3} \int_{0}^{r} \frac{\log(1-t)}{t} dt-\frac{1}{3} \log(1-r)+ \sum_{n=2}^{\infty} \frac{r^n}{3 n^2}-\left(\frac{1}{3}+\frac{\pi ^2}{36}\right)
\end{equation*}
An easy calculation shows that
\begin{equation}\label{eq14}
P(0)=-\left(\frac{1}{3}+\frac{\pi ^2}{36}\right)<0
\end{equation}
and
\begin{align}\label{eq15}
P\left(\frac{1}{2}\right)&=2-\frac{\pi ^2}{36}+\frac{\log2}{3}+\frac{1}{36} \left(-6+\pi ^2-6 \log 2^2\right)+\frac{1}{36} \left(\pi ^2-6 \log 2^2\right)\nonumber\\
&=\frac{1}{36} \left(66+\pi ^2-12 \log 2^2+\log4096\right)\nonumber\\
&\simeq2.17839>0.
\end{align}
Equations \eqref{eq14} and \eqref{eq15} together with Intermediate Value Theorem yield that there exists a root $r\in(0, 1/2)$ of the equation $P(r)=0.$
Also,
\begin{align*}
P'(r)&=\frac{2 \left(2+2 r-3 r^2+r^3\right)}{3 (1-r)^3}-\frac{\log(1-r)}{3r}+\frac{1}{3 (1-r)}-\frac{1}{3} \left(1+\frac{\log(1-r)}{r}\right)\\
&=\frac{1}{3} \left(\frac{4-r (-5+(8-3 r) r)}{(1-r)^3}-\frac{2 \log(1-r)}{r}\right)\\
&=\frac{r \left(4+5 r-8 r^2+3 r^3\right)-2 (1-r)^3 \log(1-r)}{3 (1-r)^3 r}\\
&=\frac{r \left(4+(1-r) r (5-3 r)\right)-2 (1-r)^3 \log(1-r)}{3 (1-r)^3 r}\\
&>0.
\end{align*}
Thus, there exists the  unique  root $r\in(0, 1)$ of the equation $P(r)=0$, say $r_{31}$. By equations \eqref{eq16} and \eqref{eq17}, for $|z|\leq r_{31}$, we have
\[|f(z)| + |f^{'}(z)||z| + \sum_{n=2}^{\infty} |a_n z^n| \le d( f(0), \partial{f(\mathbb{D})}).\]
To prove the sharpness, consider the function
\begin{equation*}
    f(z)= \frac{2z}{3(1-z)} - \frac{1}{3} \int_{0}^{z} \frac{\log(1-\zeta)}{\zeta} d\zeta.
\end{equation*}
Note that
\[-\frac{1}{3}\int_0^r \frac{\log(1-t)}{t} \, dt=\frac{r}{3}+\frac{r^2}{12}+\frac{r^3}{27}+\frac{r^4}{48}+\frac{r^5}{75}+\frac{r^6}{108}+\cdots\]
and \[-(1/3) \log(1 - r)=\frac{r}{3}+\frac{r^2}{6}+\frac{r^3}{9}+\frac{r^4}{12}+\frac{r^5}{15}+\frac{r^6}{18}+\cdots\]
So, at  $z=r=r_{31}$, we have
\begin{align*}
|f(z)|& + |f^{'}(z)||z| + \sum_{n=2}^{\infty} |a_n| |z|^n\\
&= \left|\frac{2r}{3(1-r)} - \frac{1}{3} \int_{0}^{r} \frac{\log(1-\zeta)}{\zeta} d\zeta \right| + |r| \left| \frac{2}{3(1-r)^2}-\frac{\log(1-r)}{3r} \right|\\
  &+   \sum_{n=2}^{\infty} \left| \left( \frac{2}{3}+ \frac{n^2}{3}\right) \right| |r|^n\\
&= \frac{2r}{3(1-r)} - \frac{1}{3} \int_{0}^{r} \frac{\log(1-\zeta)}{\zeta} d\zeta  +   \frac{2r}{3(1-r)^2}-\frac{\log(1-r)}{3r}  +   \sum_{n=2}^{\infty}  \left( \frac{2}{3}+ \frac{n^2}{3}\right) r^n\\
&=\frac{2 (2-r^2) r}{3 (1-r)^2}-\frac{1}{3} \int_{0}^{r} \frac{\log(1-t)}{t} dt-\frac{1}{3} \log(1-r)+ \sum_{n=2}^{\infty} \frac{r^n}{3 n^2}\\
&=\frac{1}{3}+\frac{\pi ^2}{36}\\
&=\frac{1}{3} +\frac{1}{3} \int_{0}^{1} \frac{\log(1+t)}{t} dt\\
&=d(f(0), \partial{f(\mathbb{D})}).
\end{align*}
Hence, the result is sharp.
\end{proof}
\noindent
Next we prove the following Bohr radius associated with powers of the coefficient $(|a_n|)$ for the class $\mathcal{C}_3$.
\begin{theorem}
Suppose that $f(z)=z + \sum_{n=2}^{\infty} a_n z^n \in \mathcal{C}_3$, then
\begin{equation} \label{2.3.2}
|z|+ \sum_{n=2}^{\infty} |a_n z^n| + \sum_{n=2}^{\infty} |a_n|^{p} |z|^{np}  \le d(f(0), \partial{f(\mathbb{D})})
\end{equation}
for $|z| \le R_{p}$ and $p \ge 1$, where $R_{p}\in(0, 1/2)$ is the root of the equation
\begin{equation}\label{eq20}
\frac{(3-r) r}{3 (1-r)}+\sum_{n=2}^{\infty} \frac{r^n}{3 n^2}  + \sum_{n=2}^{\infty} \left( \frac{2}{3} + \frac{1}{3 n^2} \right)^p  r^{np}- \frac{1}{36} \left(12+\pi ^2\right)=0.
\end{equation}
The result is sharp.
\end{theorem}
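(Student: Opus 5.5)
The argument follows the pattern of the corresponding results for $\mathcal{C}_1$ and $\mathcal{C}_2$. We proceed in four steps.

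\emph{Lower bound for the distance.} From the lower growth estimate in Lemma \ref{lm4}, letting $r\to1$ exactly as in \eqref{eq16}, we get
\[
d(f(0),\partial f(\mathbb{D}))\ge \frac13+\frac13\int_0^1\frac{\log(1+t)}{t}\,dt=\frac{12+\pi^2}{36}.
\]

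\emph{Upper bound for the left-hand side.} By the coefficient bound \eqref{coef3}, and since $t\mapsto t^p$ is increasing for $p\ge1$, we have for $|z|\le r$
\[
|z|+\sum_{n=2}^{\infty}|a_n||z|^n+\sum_{n=2}^{\infty}|a_n|^p|z|^{np}\le r+\sum_{n=2}^{\infty}\Big(\frac23+\frac1{3n^2}\Big)r^n+\sum_{n=2}^{\infty}\Big(\frac23+\frac1{3n^2}\Big)^p r^{np}.
\]
The first two terms combine in closed form: $r+\frac{2}{3}\cdot\frac{r^2}{1-r}=\frac{(3-r)r}{3(1-r)}$, with the term $\sum_{n\ge2} r^n/(3n^2)$ left over. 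So the right-hand side minus $(12+\pi^2)/36$ is exactly the function $T(r)$ on the left of \eqref{eq20}. The inequality \eqref{2.3.2} therefore holds whenever $T(r)\le0$.

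\emph{Locating the root.} We have $T(0)=-(12+\pi^2)/36<0$. At $r=1/2$, the first term alone is $\frac{(5/2)(1/2)}{3/2}=5/6$, and $5/6>(12+\pi^2)/36\approx0.607$, so $T(1/2)>0$ since all the remaining series are nonnegative. By the Intermediate Value Theorem there is a root in $(0,1/2)$. Uniqueness follows from monotonicity: $T$ is a sum of power series with nonnegative coefficients plus a constant, so
\[
T'(r)=\frac{d}{dr}\frac{(3-r)r}{3(1-r)}+\sum_{n\ge2}\frac{r^{n-1}}{3n}+\sum_{n\ge2}np\Big(\frac23+\frac1{3n^2}\Big)^p r^{np-1}>0,
\]
where $\frac{(3-r)r}{3(1-r)}=r+\frac{2r^2}{3(1-r)}$ is clearly increasing. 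Hence $T\le0$ exactly on $[0,R_p]$.

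\emph{Sharpness.} Take the extremal function of Lemmas \ref{lm3} and \ref{lm4}, $f(z)=\frac{2z}{3(1-z)}-\frac13\int_0^z\frac{\log(1-\zeta)}{\zeta}\,d\zeta$. Its coefficients are $a_n=\frac23+\frac1{3n^2}>0$, so every coefficient inequality is an equality. We also need its boundary distance to equal $(12+\pi^2)/36$, which we take from equality in the lower growth bound as stated in Lemma \ref{lm4} and as used in the sharpness part of Theorem \ref{th2}. Then at $z=R_p$ the left-hand side of \eqref{2.3.2} equals $d(f(0),\partial f(\mathbb{D}))$, so the radius cannot be increased.

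The main obstacle is this last identification. The function in question has positive coefficients, so its minimal modulus is attained in the direction $z=-r$, and the limit as $r\to1^-$ of $|f(-r)|$ is $\frac13+\frac{\pi^2}{36}$. One must justify that this liminf over the boundary really is the distance for this $f$. Following the paper's convention in Theorem \ref{th2}, this is accepted from the equality statement of Lemma \ref{lm4}.
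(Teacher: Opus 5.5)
Your proposal is correct and follows the paper's proof essentially step for step: the growth-based lower bound $d(f(0),\partial f(\mathbb{D}))\ge (12+\pi^2)/36$, the coefficient bound \eqref{coef3} summed into the left side of \eqref{eq20}, a sign change on $(0,1/2)$ together with monotonicity, and sharpness via the extremal function at $z=R_p$ (your estimate $T(1/2)\ge 5/6-(12+\pi^2)/36>0$ is cleaner than the paper's evaluation of $Q(1/2)$). The step you flag as the main obstacle is routine: close-to-convex functions are univalent, so every cluster value of $f$ on $\partial\mathbb{D}$ lies on $\partial f(\mathbb{D})$, hence $d(f(0),\partial f(\mathbb{D}))=\liminf_{|z|\to1}|f(z)|$, which the growth bound makes at least $(12+\pi^2)/36$ and the radial limit $|f(-r)|\to(12+\pi^2)/36$ makes at most that.
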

\begin{proof}
Proceeding as in Theorem \ref{th2}, we have
\begin{equation}\label{eq18}
 d(f(0), \partial{f(\mathbb{D})}) \ge  \frac{1}{36} \left(12+\pi ^2\right).
\end{equation}
It follows from  equations $(\ref{coef3})$ that for $|z| \le r$, we have
\begin{align}\label{eq19}
|z|+ \sum_{n=2}^{\infty} |a_n| |z|^n+ \sum_{n=2}^{\infty} |a_n|^{p} |z|^{np}
&\le r+ \sum_{n=2}^{\infty} \left( \frac{2}{3} + \frac{1}{3 n^2} \right)  r^n + \sum_{n=2}^{\infty} \left( \frac{2}{3} + \frac{1}{3 n^2} \right)^p  r^{np}\nonumber\\
&=\frac{(3-r) r}{3 (1-r)}+\sum_{n=2}^{\infty} \frac{r^n}{3 n^2}  + \sum_{n=2}^{\infty} \left( \frac{2}{3} + \frac{1}{3 n^2} \right)^p  r^{np}
\end{align}
In view of the equations \eqref{eq18} and \eqref{eq19}, the inequality ($\ref{2.3.2}$) holds if and only if
\begin{equation*}
\frac{(3-r) r}{3 (1-r)}+\sum_{n=2}^{\infty} \frac{r^n}{3 n^2}  + \sum_{n=2}^{\infty} \left( \frac{2}{3} + \frac{1}{3 n^2} \right)^p  r^{np}\le  \frac{1}{36} \left(12+\pi ^2\right).
\end{equation*}
Now, we define a function $Q:[0, 1)\rightarrow\mathbb{R}$ such that
\begin{equation*}
Q(r)=\frac{(3-r) r}{3 (1-r)}+\sum_{n=2}^{\infty} \frac{r^n}{3 n^2}  + \sum_{n=2}^{\infty} \left( \frac{2}{3} + \frac{1}{3 n^2} \right)^p  r^{np}-\frac{1}{36} \left(12+\pi ^2\right).
\end{equation*}
An easy calculation shows that
\[Q(0)=-\frac{1}{36} \left(12+\pi ^2\right)<0\]
and
\begin{align*}
Q\left(\frac{1}{2}\right)&=\frac{1}{3}-\frac{(\log2)^2}{6}+\sum _{n=2}^{\infty } \left(\frac{2}{3}+\frac{2^{-n}}{3 n^2}\right)^p\frac{1}{2^{n p}}\nonumber\\
&=\frac{1}{6}\left(2-(\log2)^2\right)+\sum _{n=2}^{\infty } \left(\frac{2}{3}+\frac{2^{-n}}{3 n^2}\right)^p\frac{1}{2^{n p}}\nonumber\\
&>0.
\end{align*}
Since $Q(0).Q(1/2)<0$, by Intermediate Value Theorem, there exists a root $r\in(0, 1/2)$ satisfying the equation $Q(r)=0$. Moreover,
\begin{align*}
Q'(r)&=\frac{3-2 r+r^2}{3 (1-r)^2}-\frac{r+\log(1-r)}{3 r}+\sum _{n=2}^{\infty } \left(\frac{2}{3}+\frac{1}{3 n^2}\right)^p {np} r^{{np}-1}>0.
\end{align*}
Thus, there exists the unique root, say $R_{p}\in(0, 1/2)$ satisfying the equation \eqref{eq20}.
In order to prove the sharpness, consider the function
\begin{equation*}
    f(z)= \frac{2z}{3(1-z)} - \frac{1}{3} \int_{0}^{z} \frac{\log(1-\zeta)}{\zeta} d\zeta.
\end{equation*}
At $z=r=R_{p}$, we have
\begin{align*}
|z|+ \sum_{n=2}^{\infty} |a_n z^n| + \sum_{n=2}^{\infty} |a_n|^{p} |z|^{np}
&=|z|+ \sum_{n=2}^{\infty} \left|\left( \frac{2}{3} + \frac{1}{3 n^2} \right) z^n\right| + \sum_{n=2}^{\infty} \left|\left( \frac{2}{3} + \frac{1}{3 n^2} \right)\right|^{p} |z|^{np}\\
&=r+ \sum_{n=2}^{\infty} \left( \frac{2}{3} + \frac{1}{3 n^2} \right) r^n + \sum_{n=2}^{\infty} \left( \frac{2}{3} + \frac{1}{3 n^2} \right)^{p} r^{np}\\
&= \frac{1}{3} + \frac{1}{3} \int_{0}^{1} \frac{\log(1-t)}{t} dt\\
&=d(f(0), \partial{f(\mathbb{D})}).
\end{align*}
Hence, the result is sharp.
\end{proof}
\noindent
We find Bohr-Rogosinski radius for the class $\mathcal{C}_3$ in next two theorems.
\begin{theorem}
Suppose that $f(z)=z + \sum_{n=2}^{\infty} a_n z^n \in \mathcal{C}_3$, then
\begin{equation} \label{2.3.3}
|f(z)|+ \sum_{n=N}^{\infty} |a_n z^n|  \le d(f(0), \partial{f(\mathbb{D})})
\end{equation}
for $|z| \le R_{1,N}$, where $R_{1,N}$ is the solution of
\begin{equation*}
 \sum_{n=N}^{\infty}   \frac{r^{n}}{3n^2} -\frac{1}{3} \int_{0}^{r} \frac{\log(1-t)}{t} dt- \frac{12+\pi ^2-36 r-\pi ^2 r-24 r^N}{36 (1-r)}=0.
\end{equation*}
The result is sharp.
\end{theorem}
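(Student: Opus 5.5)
We follow the same scheme as in Theorem \ref{th3}, now using the $\mathcal{C}_3$ estimates. First, as in \eqref{eq16}, the lower growth bound in \eqref{growth3} gives
\[
d(f(0),\partial f(\mathbb{D}))\ge \frac{1}{3}+\frac{1}{3}\int_0^1\frac{\log(1+t)}{t}\,dt=\frac{12+\pi^2}{36}.
\]
Next, for $|z|\le r$ we combine the upper growth bound in \eqref{growth3} with the coefficient bound \eqref{coef3}:
\[
|f(z)|+\sum_{n=N}^{\infty}|a_n||z|^n\le \frac{2r}{3(1-r)}-\frac13\int_0^r\frac{\log(1-t)}{t}\,dt+\sum_{n=N}^{\infty}\Big(\frac23+\frac{1}{3n^2}\Big)r^n .
\]
Since $\sum_{n\ge N}\frac23 r^n=\frac{2r^N}{3(1-r)}$, subtracting $\frac{12+\pi^2}{36}$ and putting the rational terms over the denominator $36(1-r)$ turns the right side minus the distance bound into exactly the function
\[
R(r)=\sum_{n=N}^{\infty}\frac{r^n}{3n^2}-\frac13\int_0^r\frac{\log(1-t)}{t}\,dt-\frac{12+\pi^2-36r-\pi^2r-24r^N}{36(1-r)}
\]
appearing in the statement. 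I would verify this algebra explicitly: $\frac{24r+24r^N}{36(1-r)}-\frac{(12+\pi^2)(1-r)}{36(1-r)}$ should reproduce the numerator $-(12+\pi^2-36r-\pi^2r-24r^N)$.

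Then we show that $R$ has a unique zero in $(0,1)$ and is negative before it. We have $R(0)=-(12+\pi^2)/36<0$. At $r=1/2$, the term $\frac{r+r^N}{1.5(1-r)}\ge \frac23$ already exceeds $\frac{12+\pi^2}{36}\approx0.607$, and the remaining series and integral terms are nonnegative, so $R(1/2)>0$. Monotonicity comes from differentiating: $-\frac13\int_0^r\frac{\log(1-t)}{t}dt$ has derivative $-\frac{\log(1-r)}{3r}>0$; the power series has nonnegative coefficients; and $\frac{2(r+r^N)}{3(1-r)}$ is increasing. Hence $R'>0$ on $(0,1)$, the root $R_{1,N}$ is unique, and $R(r)\le0$ for $r\le R_{1,N}$, which gives \eqref{2.3.3}. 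The cleanest way to present the monotonicity is termwise, rather than through the combined fraction, so that no messy sign analysis is needed.

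For sharpness, take the extremal function $f(z)=\frac{2z}{3(1-z)}-\frac13\int_0^z\frac{\log(1-\zeta)}{\zeta}d\zeta$ from Lemma \ref{lm3}. Its coefficients are $a_n=\frac23+\frac1{3n^2}>0$, so at $z=r=R_{1,N}$ both the growth bound and the coefficient bound are attained, and the left side equals $\frac{12+\pi^2}{36}$.

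The main obstacle is identifying $d(f(0),\partial f(\mathbb{D}))$ for this extremal function with $\frac{12+\pi^2}{36}$. The lower growth bound is attained for the rotation $-f(-z)$ along the negative direction. So I would evaluate $\lim_{r\to1}\big(\frac{2r}{3(1+r)}+\frac13\int_0^r\frac{\log(1+t)}{t}dt\big)$ using $\int_0^1\frac{\log(1+t)}{t}dt=\frac{\pi^2}{12}$. As the paper does in earlier sections, I would then take the point $z=-r$ for that rotated function, where all moduli coincide, so that equality holds throughout.
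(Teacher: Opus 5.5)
Your proposal is correct and follows essentially the same route as the paper. You bound $|f(z)|$ and $|a_n|$ by the $\mathcal{C}_3$ growth and coefficient estimates and compare with $d(f(0),\partial f(\mathbb{D}))\ge (12+\pi^2)/36$. You then show the resulting function $R$ is increasing with $R(0)<0<R(1/2)$, and for sharpness you use the extremal function at $z=r$.

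One small slip: the lower growth bound is attained by $f$ itself at $z=-r$ (equivalently by $-f(-z)$ at $z=+r$), not by $-f(-z)$ along the negative axis. This does not affect your value $d=(12+\pi^2)/36$ for the extremal function, which is the correct one; the paper's last display writes $\log(1-t)$ where $\log(1+t)$ is meant.
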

\begin{proof}
Using Lemma \ref{lm3} and growth estimate  ($\ref{growth3}$), for $|z| \le r$, we get
\begin{equation*}
|f(z)|+ \sum_{n=N}^{\infty} |a_n z^n|  \le \frac{2r}{3(1-r)}-\frac{1}{3} \int_{0}^{r} \frac{\log(1-t)}{t} dt+ \sum_{n=N}^{\infty} \left( \frac{2}{3} + \frac{1}{3n^2} \right) r^{n}.
\end{equation*}
It follows from  the above inequality that the  equation ($\ref{2.3.3}$) holds if and only if
\begin{equation*}
\frac{2r}{3(1-r)}-\frac{1}{3} \int_{0}^{r} \frac{\log(1-t)}{t} dt+ \sum_{n=N}^{\infty} \left( \frac{2}{3} + \frac{1}{3n^2} \right) r^{n} \le  \frac{1}{3} +\frac{1}{3} \int_{0}^{1} \frac{\log(1+t)}{t} dt.
\end{equation*}
In order to find the radius $R_{1,N}\in (0, 1)$ satisfying the inequality $(\ref{2.3.3})$, we define the function $R:[0, 1)\rightarrow\mathbb{R}$ by
\begin{equation*}
R(r)=\sum_{n=N}^{\infty}   \frac{r^{n}}{3n^2} -\frac{1}{3} \int_{0}^{r} \frac{\log(1-t)}{t} dt- \frac{12+\pi ^2-36 r-\pi ^2 r-24 r^N}{36 (1-r)}.
\end{equation*}
It is easy to see that
$R(0)=-1/3-\pi^2/36<0$ and
\begin{align*}
R\left(\frac{1}{2}\right)&=\sum_{n=N}^{\infty}\frac{1}{3n^2}\left(\frac{1}{2^{n}}\right)+\frac{1}{36} \left(\pi ^2-6 (\log2)^2\right)+\frac{2}{3} \left(1+2^{1-N}\right)-\frac{1}{36} \left(12+\pi ^2\right)\\
&=\sum_{n=N}^{\infty}\frac{1}{3n^2}\left(\frac{1}{2^{n}}\right)+\frac{1}{6} \left(2+2^{3-N}-(\log 2)^2\right)\\
&>0.
\end{align*}
In view of the above and using Intermediate Value Theorem it follows that there exists a root $\varepsilon\in(0, 1/2)$ such that $R(\varepsilon)=0.$
Since \
\[
R'(r)=\frac{2}{3 (1-r)^2}-\frac{\log(1-r)}{3 r}+\frac{2 r^{N-1} (N+r-N r)}{3 (1-r)^2}+\sum_{n=N}^{\infty}   \frac{r^{n-1}}{3n}>0,
\] there exists the unique $ R_{1,N}\in (0, 1/2)$ such that $R(R_{1,N})=0.$
\\
\noindent
Thus, equation $(\ref{2.3.3})$ holds for $r \le R_{1,N}$, where $R_{1,N}$ is the solution of the equation
\begin{equation*}
\frac{2r}{3(1-r)}-\frac{1}{3} \int_{0}^{r} \frac{\log(1-t)}{t} dt+ \sum_{n=N}^{\infty} \left( \frac{2}{3} + \frac{1}{3n^2} \right) r^{n} -\frac{1}{3}- \frac{1}{3} \int_{0}^{1} \frac{\log(1+t)}{t} dt=0.
\end{equation*}
To prove the sharpness, we assume
\begin{equation*}
   f(z)= \frac{2z}{3(1-z)} - \frac{1}{3} \int_{0}^{z} \frac{\log(1-\zeta)}{\zeta} d\zeta.
\end{equation*}
At $z=r=R_{1,N}$, we have
\begin{align*}
|f(z)|+ \sum_{n=N}^{\infty} |a_n z^n|&=  \left|\frac{2z}{3(1-z)} - \frac{1}{3} \int_{0}^{z} \frac{\log(1-t)}{t} dt \right|+\sum_{n=N}^{\infty} \left|\left( \frac{2}{3} + \frac{1}{3 n^2} \right)\right||z|^{n}\\
 &=\frac{2r}{3(1-r)} - \frac{1}{3} \int_{0}^{r} \frac{\log(1-t)}{t} dt +\sum_{n=N}^{\infty} \left( \frac{2}{3} + \frac{1}{3 n^2} \right)r^{n}\\
 &=\frac{1}{3} + \frac{1}{3} \int_{0}^{1} \frac{\log(1-t)}{t} dt\\
 &=d(f(0), \partial{f(\mathbb{D})}).
\end{align*}
Hence, the result is sharp.
\end{proof}
\begin{theorem}
Suppose that $f(z)=z + \sum_{n=2}^{\infty} a_n z^n \in \mathcal{C}_3$, then
\begin{equation} \label{2.3.4}
|f(z)|^2+ \sum_{n=N}^{\infty} |a_n z^n|  \le d(f(0), \partial{f(\mathbb{D})})
\end{equation}
for $|z| \le R_{2,N}$, where $R_{2,N}$ is the solution of
\begin{equation*}
\left(
\frac{2r}{3(1-r)}-\frac{1}{3} \int_{0}^{r} \frac{\log(1-t)}{t} dt
\right)^2
+ \sum_{n=N}^{\infty}   \frac{r^{n}}{3n^2}  +\frac{ 24 r^N+(12 +\pi ^2)r-12-\pi ^2  }{36 (1-r)}=0.
\end{equation*}
The result is sharp.
\end{theorem}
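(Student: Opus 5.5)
We follow the scheme already used for Theorem~\ref{th4} and for the $\mathcal{C}_3$ analogue with $|f(z)|$ in place of $|f(z)|^2$.

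\emph{Lower bound for the distance.} First we bound $d(f(0),\partial f(\mathbb{D}))$ from below. By the left inequality in \eqref{growth3} (Lemma~\ref{lm4}), letting $r\to1$ gives, exactly as in \eqref{eq16},
\[
d(f(0),\partial f(\mathbb{D}))\ge \frac13+\frac13\int_0^1\frac{\log(1+t)}{t}\,dt=\frac{12+\pi^2}{36}.
\]

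\emph{Upper bound for the left-hand side.} For $|z|\le r$ we combine the upper growth bound in \eqref{growth3} with the coefficient bound \eqref{coef3} of Lemma~\ref{lm3}:
\[
|f(z)|^2+\sum_{n=N}^\infty|a_n||z|^n\le \left(\frac{2r}{3(1-r)}-\frac13\int_0^r\frac{\log(1-t)}{t}\,dt\right)^2+\sum_{n=N}^\infty\Big(\frac23+\frac1{3n^2}\Big)r^n .
\]
We split the series as $\frac23\sum_{n\ge N}r^n=\frac{2r^N}{3(1-r)}$ plus $\sum_{n\ge N}\frac{r^n}{3n^2}$. Subtracting $\frac{12+\pi^2}{36}$, the rational pieces combine over the common denominator $36(1-r)$ to give exactly
\[
\frac{24r^N+(12+\pi^2)r-12-\pi^2}{36(1-r)}.
\]
So the left-hand side of the defining equation is a function $S(r)$, and \eqref{2.3.4} holds whenever $S(r)\le0$.

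\emph{Root analysis.} Next we show $S$ has a unique zero in $(0,1/2)$.
\begin{itemize}
\item At $r=0$ we have $S(0)=-\frac{12+\pi^2}{36}<0$.
\item At $r=1/2$, using $-\int_0^{1/2}\frac{\log(1-t)}{t}\,dt=\mathrm{Li}_2(1/2)=\frac{\pi^2}{12}-\frac{(\log2)^2}{2}$, the squared term is at least $(2/3)^2$. Since $\frac{4}{9}>\frac{12+\pi^2}{36}-\frac{24\cdot 2^{-N}}{36}\cdot 2$ fails only marginally, this needs care. The cleaner route is to note that $\frac{2r}{3(1-r)}$ alone equals $2/3$ at $r=1/2$, so its square is $4/9\approx0.444$, while $\frac{12+\pi^2}{36}\approx0.607$. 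We therefore add the $\mathrm{Li}_2$ contribution: the squared term becomes $\big(\tfrac23+\tfrac13\mathrm{Li}_2(\tfrac12)\big)^2\approx(0.86)^2\approx0.74>0.607$, so $S(1/2)>0$.
\item For monotonicity, differentiate term by term. The derivative of the square is $2(\cdot)(\cdot)'$, where both factors are positive (each is a power series in $r$ with positive coefficients). The derivative of $\sum r^n/(3n^2)$ is positive. The derivative of the last fraction is $\frac{24Nr^{N-1}(1-r)+24r^N}{36(1-r)^2}>0$, because the $(12+\pi^2)$ terms cancel in the quotient rule. Hence $S'>0$.
\end{itemize}
The intermediate value theorem then gives a unique root $R_{2,N}\in(0,1/2)$ with $S\le0$ on $[0,R_{2,N}]$.

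\emph{Sharpness.} We use the extremal function $f(z)=\frac{2z}{3(1-z)}-\frac13\int_0^z\frac{\log(1-\zeta)}{\zeta}\,d\zeta$ of Lemmas~\ref{lm3} and \ref{lm4}. Its coefficients are positive and equal $\frac23+\frac1{3n^2}$, so at $z=R_{2,N}$ both estimates become equalities. Its boundary distance is attained along $z\to-1$, where $|f(-r)|$ equals the lower growth bound, so the distance is exactly $\frac{12+\pi^2}{36}$, matching the lower bound.

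\emph{Main obstacle.} The delicate points are the sign check $S(1/2)>0$, which requires the dilogarithm value rather than a crude bound, and the justification that $d(f(0),\partial f(\mathbb{D}))$ for the extremal function equals the lower bound, via $|f(-r)|$ for $r\to1$.
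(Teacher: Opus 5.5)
Your proposal is correct and follows the paper's proof essentially step for step. It uses the same lower bound $\frac{12+\pi^2}{36}$ for the distance, the same majorant from \eqref{growth3} and \eqref{coef3}, the same function $S$ with $S(0)<0$, $S(1/2)>0$ (your value $\frac23+\frac13\mathrm{Li}_2(\frac12)$ gives exactly the paper's squared term $\frac{(24+\pi^2-6(\log 2)^2)^2}{1296}$), $S'>0$, and the same extremal function; your check that this function's boundary distance is attained as $z=-r\to-1$ fills in a step the paper leaves implicit, and the abandoned comparison with $4/9$ in your $S(1/2)$ bullet should simply be deleted.
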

\begin{proof}
It follows from Lemma \ref{lm3} and Lemma \ref{lm4} that
\begin{align*}
|f(z)|^2+ \sum_{n=N}^{\infty} |a_n z^n| & \le \left( \frac{2r}{3(1-r)}-\frac{1}{3} \int_{0}^{r} \frac{\log(1-t)}{t} dt \right)^2 + \sum_{n=N}^{\infty} \left( \frac{2}{3} + \frac{1}{3n^2} \right) r^{n}\\
&=\left( \frac{2r}{3(1-r)}-\frac{1}{3} \int_{0}^{r} \frac{\log(1-t)}{t} dt \right)^2 + \sum_{n=N}^{\infty} \frac{r^{n}}{3n^2}+\frac{2 r^N}{3(1-r)}
\end{align*}
for $|z| \le r$.
\\
\noindent
Define a function $S:[0, 1)\rightarrow\mathbb{R}$ by
\begin{equation*}
S(r)=\left( \frac{2r}{3(1-r)}-\frac{1}{3} \int_{0}^{r} \frac{\log(1-t)}{t} dt \right)^2+ \sum_{n=N}^{\infty} \left( \frac{2}{3} + \frac{1}{3n^2} \right) r^{n} -\frac{1}{3}- \frac{1}{3} \int_{0}^{1} \frac{\log(1+t)}{t} dt.
\end{equation*}
Note that
$S(0)=-{1}/{3}-{\pi ^2}/{36}$.
Also, an easy calculation yields
\begin{align*}
S(1/2)&=\left( \frac{2}{3}-\frac{1}{3} \int_{0}^{1/2} \frac{\log(1-t)}{t} dt \right)^2+ \sum_{n=N}^{\infty}  \frac{1}{3n^22^n}  +\frac{2^{2-N}}{3}-\frac{1}{36} \left(12+\pi ^2\right)\\
&=\frac{\left(24+\pi ^2-6 (\log2)^2\right)^2}{1296}+ \sum_{n=N}^{\infty}  \frac{1}{3n^22^n}  +\frac{2^{2-N}}{3}-\frac{1}{36} \left(12+\pi ^2\right)\\
&=\frac{\left(24+\pi ^2-6 (\log2)^2\right)^2}{1296}-\frac{1}{36} \left(12+\pi ^2\right)+ \sum_{n=N}^{\infty}  \frac{1}{3n^22^n}  +\frac{2^{2-N}}{3}\\
&=\frac{144+12 \pi ^2+\pi ^4-288 (\log2)^2-12 \pi ^2 (\log2)^2+36 (\log2)^4}{1296}+ \sum_{n=N}^{\infty}  \frac{1}{3n^22^n}  +\frac{2^{2-N}}{3}\\
&>0.
\end{align*}
In view of the above, $S(0)S(1/2)<0$, and by the  Intermediate Value Theorem there exists a root $\zeta\in(0, 1/2)$ such that $S(r)\leq 0$ for $r\leq \zeta$. Furthermore, Since
\begin{align*}
S'(r)&=2\left( \frac{2r}{3(1-r)}-\frac{1}{3} \int_{0}^{r} \frac{\log(1-t)}{t} dt \right)\left(\frac{2}{3 (1-r)^2}-\frac{\log(1-r)}{3 r}\right)\\
&+\frac{2 r^{-1+N} (N+r-N r)}{3 (1-r)^2}+\sum_{n=N}^{\infty}   \frac{r^{n-1}}{3n}\\
&>0,
\end{align*}
there exists the unique $ R_{2,N}\in(0, 1/2)$ such that the inequality $(\ref{2.3.4})$ holds for $r \le R_{2,N}$, and satisfying the equation $S( R_{2,N})=0.$
\\
\noindent
The result is sharp for the function
\begin{equation*}
   f(z)= \frac{2z}{3(1-z)} - \frac{1}{3} \int_{0}^{z} \frac{\log(1-\zeta)}{\zeta} d\zeta.
\end{equation*}
\end{proof}

\end{section}


\noindent
\textbf{Data availability statement} Data sharing not applicable to this article as no
datasets were generated or analysed during the current study.
%

%

\section*{Declarations}

%
\noindent
\textbf{Conflict of interest}  The authors declare they have no conflict of interest regarding the publication of this paper.

\noindent
\textbf{Funding}
No funding from any source.\\

\bibliographystyle{abbrv}
\bibliography{NJ-bibliography}

%
%
%
%
%

\end{document}